\newtheorem{claim}{Claim}
\newtheorem{theorem}{Theorem}
\newtheorem{remark}{Remark}
\newtheorem{lemma}{Lemma}
\begin{document}

\begin{center}
Nonlinear Analysis: Modelling and Control, Vol. vv, No. nn, YYYY\\
\copyright\ Vilnius University\\[24pt]
\LARGE
\textbf{On bifurcation of a stage-structured single-species model with harvest}\footnote{This research was supported by National Natural Science Foundation of China (grant Nos.~12301643, 12171117) and Natural Science Foundation of Jiangsu Province, China (No. BK20221106).}\\[6pt]
\small
\textbf {Honghua Bin$^{a}$, Yuying Liu$^{b,c}$, Junjie Wei$^{d}$}\\[6pt]
$^{a}$School of Science, Jimei University, \\ Xiamen, Fujian, 361021, China \\[6pt]
$^{b}$School of Mathematics, China University of Mining and Technology, \\ Xuzhou, Jiangsu, 221116, China \\[6pt]
$^{c}$Jiangsu Center for Applied Mathematics at CUMT, \\ Xuzhou, Jiangsu, 221116, China\\[6pt]
$^{d}$School of Science, Harbin Institute of Technology at Weihai,\\ Weihai, Shandong, 264209, China \\[6pt]

Received: date\quad/\quad
Revised: date\quad/\quad
Published online: data
\end{center}

\begin{abstract}
This paper investigates the dynamics of the Nicholson's blowflies equation with stage structure and harvest. By employing the property of Lambert W function, the existence of positive equilibria is obtained. With aid of the distribution of the eigenvalues in the characteristic equation, the local stability of the equilibria and the existence of Hopf bifurcation of the single-species model are obtained. Furthermore, by applying the results due to Bal$\acute{a}$zs I., R$\ddot{o}$st G. (Internat. J. Bifur. Chaos 31(2021):2150071), when the harvest rate is sufficiently small, the direction of the Hopf bifurcations at the first and last bifurcation values are forward and backward, respectively, and the bifurcating periodic solutions are all asymptotically stable. Finally, Numerical simulations are conducted to validate the theoretical conclusions. These results can be seen as the complement of the works of Shu et al. (J. Differential Equations 255 (2013) 2565).

 \vskip 2mm

\textbf{Keywords:}  Nicholson's blowflies equation, stage structure,  Hopf bifurcation, harvest.

\end{abstract}

\nocite{2009ProcDETAp}

\section{Introduction}
As the basic unit of biological community, population plays a very important role in biological research\cite{Allee1931Animal,Lewis2016The}. The blowfly has become one of the most important organisms in the research of population dynamics because of its short growth cycle, rapid reproduction and low cost\cite{Gurney1980}. In 1954, Nicholson introduced a delayed differential equation to model the population of the laboratory blowfly\cite{Nicho1954}:
\begin{equation}
\frac{dx}{dt}=Px(t-\tau)exp(-\alpha x(t-\tau))-\delta x(t),
\end{equation}
where $P>0$ denotes the maximum per capita daily egg production rate, $1/\alpha$ represents the size at which the population reproduces at its maximum rate, $\delta>0$ is per capita daily adult mortality rate and $\tau$ denotes the maturity delay. Since then, many scientists have studied the dynamics of the Nicholson blowflies equation\cite{SoJ1994,SoJ1998,Gomes2006,Newcomb2005, Berezansky2010,Song2016,Liu2021}.

 Recently, Lou et al. \cite{LR} and Ruiz-Herrera et al. \cite{A} have proposed the following single-species model:
\begin{equation}\label{1}
\left\{ \begin{split}
\frac{dI(t)}{dt}&= B(M(t))-B(M(t-\tau))e^{-\mu\tau}-\mu I(t),\\
\frac{dM(t)}{dt}&= B(M(t-\tau))e^{-\mu\tau}-\gamma M(t),
\end{split}\right.
 \end{equation}
where $I(t)$ and $M(t)$ represent the densities of the immature and mature at time $t$, $\mu$ and $\gamma$ are
the death rates of immature and mature, respectively; $B(M(t))$ is the birth rate function
which depends on the population size; $\tau>0$ is the time development duration from egg to adult.
When the birth function was taken as the Ricker's type function $B(M)=pMe^{-a M}$,  (\ref{1}) becomes
\begin{equation}\label{1.2}
\left\{ \begin{split}
\frac{dI(t)}{dt}&= pM(t)e^{-a M(t)}-pM(t-\tau)e^{-a M(t-\tau)}e^{-\mu\tau}-\mu I(t),\\
\frac{dM(t)}{dt}&= pM(t-\tau)e^{-a M(t-\tau)}e^{-\mu\tau}-\gamma M(t).\end{split}\right.
 \end{equation}
 The second equation in (\ref{1.2}) is
\begin{equation}\label{2}
\frac{dM(t)}{dt}=pM(t-\tau)e^{-a M(t-\tau)}e^{-\mu\tau}-\gamma M(t),
\end{equation}
which is known as Nicholson's blowflies equation with age-structure. The global Hopf bifurcation of (\ref{2}) was studied by Shu et al.\cite{Shu}. Ruiz-Herrera et al. showed that Eq.(\ref{2}) exhibits a Hopf bifurcation when the delay $\tau$ varies in \cite{A}.

When $\mu=0$, the model (\ref{2}) becomes the classical Nicholson's blowflies equation:
\begin{equation}\label{p}
\frac{dM(t)}{dt}=pM(t-\tau)e^{-a M(t-\tau)}-\gamma M(t).
\end{equation}
For the model (\ref{p}), the local and global Hopf bifurcations have been studied by Wei and Li \cite{WL}.  Bal'azs and R$\ddot{o}$st \cite{R} have improved the conclusion on properties of Hopf
bifurcation and proved that the Hopf bifurcation at the first bifurcation value is always supercritical for (\ref{p}).

As is known, it's very common for individuals in ecosystems to be harvested by the predator\cite{ruanwei}. However, ecologists have long hypothesized that excessive harvesting may inadvertently destabilize its population dynamics, leading to fluctuations in abundance\cite{Anderson2008}. Furthermore, a central question in ecology remains: what drives the collapse of certain harvested populations\cite{Fryxell2010}? In this paper, we shall explore the impact of harvest on the species' dynamics, a constant harvest term is introduced into model (\ref{2}), thus we have
\begin{equation}\label{m}
\frac{dM(t)}{dt}=pM(t-\tau)e^{-a M(t-\tau)}e^{-\mu\tau}-\gamma M(t)-h,
\end{equation}
where $h\geq 0$ denotes the harvest term, $M(t)$ denotes the density of the mature at time $t$. Other parameters are the same with those in system (\ref{2}). In this paper, we investigate system (\ref{m}).

We should mention that many scientists have analyzed the dynamics of population models with harvest\cite{Brauer1979,Brauer19798,Brauer1981,Myerscough1992,Conover2002,Neubauer2013}. Chen et. al. analyzed a predator-prey system with seasonal prey harvesting in 2023\cite{Chen2013} and performed the bifurcation analysis of the systems. Yang et. al. investigated a diffusive predator–prey model with Michaelis–Menten type harvesting\cite{Yang2018}, and obtained the existence and property of Hopf bifurcation. Feng et. al. formulated a seasonally interactive model between closed and open seasons with Michaelis-Menten type harvesting\cite{Feng2023}, they proved that under certain conditions, the existence of a periodic solution can be guaranteed provided that a closed season of arbitrary positive duration is established. Recently, Xu et al. \cite{Xu2024} explored a Holling-Tanner predator-prey system incorporating constant-yield prey harvesting, demonstrating that the model exhibits complex dynamical behaviors such as saddle-node bifurcation and Hopf bifurcation.

In this paper, we investigate the existence of two positive equilibria with sufficiently small $h$. We obtain that when $h$ is large enough, no positive equilibrium exists in system (\ref{m}), this indicates that the harvest term affects the existence of positive equilibrium. The local stability of the equilibrium is determined by analyzing the distribution of the characteristic equation's roots. Meanwhile, the existence of Hopf bifurcation is investigated by varying the maturation delay $\tau$. It is worth noting that the equilibrium changes with the delay $\tau$. In addition, the bifurcation direction and the stability of the bifurcating periodic solutions are discussed. Besides, the numerical results indicates that the positive equilibrium would undergo the stability switching phenomenon with the varying of $\tau$.

The remainder of this paper is structured as follows. In section \ref{Sec2}, we prove that the nonnegativity of the solutions in the system can not be satisfied. The existence of the equilibrium is studied. In section \ref{sec3}, we investigate the stability of the equilibrium and perform the bifurcation analysis of the system. In section \ref{sec4}, numerical simulations are performed to validate the theoretical analysis, revealing the existence of periodic solutions in the vicinity of Hopf singularities. Finally, the conclusion is given to completes this paper.
\section{Existence of equilibrium }\label{Sec2}
In this section, we will prove that the harvest term $h$ affects the positivity of solutions in Eq.(\ref{m}). For simplicity, we denote the positive initial value in Eq.(\ref{m}) as $M_0=\phi_0(\theta)$, where $\phi_0$ is a continuous function on $[-\tau,0]$ and $\phi_0>0$ for $\theta\in[-\tau,0]$.

We can easily prove that the solution $M(t,\phi_0)$ of the following problem
$$
\begin{cases}
\frac{dM(t)}{dt}=pM(t-\tau)e^{-a M(t-\tau)}e^{-\mu\tau}-\gamma M(t),\\
M_0=\phi_0(t),~~~t\in[-\tau,0],
\end{cases}
$$
satisfies  $M(t, \phi_0)>0$ for all $t\ge 0$. Which means that when there is no harvest, solution of Eq.(\ref{m}) with positive initial value is positive for $t>0$.

 As to the case when $h>0$, the non-negativity of the solutions of Eq.(\ref{m}) can not be maintained. The conclusion is summarized as follows.
\begin{claim}
For any given positive constants $a, \mu, \gamma, p$ and $h>0$,
there exists a positive function $\phi_0\in\mathcal{C}[-\tau,0]$ such that the solution of Eq.(\ref{m}) with  $\phi_0$ as the initial value is negative at $t=\tau$.
\end{claim}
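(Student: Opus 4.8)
The plan is to exploit the fact that on the first delay interval $[0,\tau]$ the delayed argument $t-\tau$ lies in $[-\tau,0]$, so the delayed term in Eq.(\ref{m}) is completely determined by the prescribed initial data $\phi_0$. Consequently, by the method of steps, on $[0,\tau]$ the delay differential equation degenerates into a linear non-autonomous first-order ordinary differential equation whose forcing is known once $\phi_0$ is fixed. Writing $g(t):=p\,\phi_0(t-\tau)\,e^{-a\phi_0(t-\tau)}e^{-\mu\tau}$ for $t\in[0,\tau]$, the equation reads $M'(t)=g(t)-\gamma M(t)-h$ with $M(0)=\phi_0(0)$, and it has a unique solution on $[0,\tau]$ since $g$ is continuous.

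First I would solve this ODE explicitly by the integrating-factor method, obtaining
\[
M(\tau)=\phi_0(0)e^{-\gamma\tau}+\int_0^{\tau}e^{-\gamma(\tau-s)}\bigl(g(s)-h\bigr)\,ds.
\]
The decisive structural observation is that the birth contribution $g$ is uniformly small whenever $\phi_0$ is small: because $\phi_0>0$ one always has $0<g(s)\le p\,e^{-\mu\tau}/(ae)$, and, more to the point, taking $\phi_0$ to be an arbitrarily small positive constant makes $g$ uniformly small while the harvest flux $-h$ remains fixed and strictly negative. This is exactly the mechanism by which harvest destroys positivity.

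The concrete choice I would make is $\phi_0\equiv\varepsilon$ for a small constant $\varepsilon>0$, which is admissible as a positive element of $\mathcal{C}[-\tau,0]$. Then $\phi_0(0)=\varepsilon$ and $g(s)=p\varepsilon e^{-a\varepsilon}e^{-\mu\tau}$, so the displayed formula exhibits $M(\tau)$ as an explicit continuous function of $\varepsilon$. Letting $\varepsilon\to0^+$, both the initial-value term $\varepsilon e^{-\gamma\tau}$ and the birth integral vanish, leaving
\[
\lim_{\varepsilon\to0^+}M(\tau)=-h\int_0^{\tau}e^{-\gamma(\tau-s)}\,ds=-\frac{h\bigl(1-e^{-\gamma\tau}\bigr)}{\gamma}<0,
\]
where strict negativity uses $h>0$ and $\tau>0$. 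By continuity of $M(\tau)$ in $\varepsilon$, there is an $\varepsilon_0>0$ small enough that $M(\tau)<0$ for the positive constant initial function $\phi_0\equiv\varepsilon_0$, which is precisely the assertion of the claim.

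I do not anticipate a genuine technical obstacle; the difficulty is conceptual rather than computational, namely recognizing that on $[0,\tau]$ the memory term is frozen to the initial data, so the model is locally an explicitly solvable linear ODE. Once that reduction is made, the negative harvest flux $-h$ dominates for small initial populations and drives the solution below zero by time $\tau$. The only point requiring genuine care is the uniform smallness of the birth term $g$, which is immediate for constant (or, more generally, uniformly small) positive $\phi_0$.
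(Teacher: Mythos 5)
Your proposal is correct and takes essentially the same route as the paper: both reduce Eq.~(\ref{m}) on $[0,\tau]$ to an explicitly solvable linear ODE by the method of steps (the delayed term being frozen to the initial data), and both choose a small positive constant initial function so that the fixed harvest flux $-h$ dominates the birth term. The only difference is presentational: the paper bounds the birth term by $p\phi_0(0)$ and extracts the explicit threshold $\phi_0(0)\le h(e^{\gamma\tau}-1)/\bigl(\gamma+p(e^{\gamma\tau}-1)\bigr)$, whereas you get the existence of a suitable $\varepsilon_0>0$ by letting $\varepsilon\to 0^+$ and invoking continuity of $M(\tau)$ in $\varepsilon$.
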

\begin{proof}
For $\forall~t\in[0,\tau]$, the value of $M(t-\tau)=\phi_0(t-\tau)$ is given. Thus the solution of Eq.(\ref{m}) is as follows:
$$M(t)=e^{-\gamma t}\big[\phi_0(0)+\int_0^t{(pe^{-\mu \tau}\phi_0(s-\tau)e^{-a\phi_0(s-\tau)}-h)}e^{-\gamma s}ds\big].$$
When $t=\tau,$ we have
\begin{equation}{\label{Mtau}}
M(\tau)=e^{-\gamma \tau}\big[\phi_0(0)+\int_0^{\tau}{(pe^{-\mu \tau}\phi_0(s-\tau)e^{-a\phi_0(s-\tau)}-h)}e^{-\gamma s}ds\big].
\end{equation}
Assume that $\phi_0(t)\equiv \phi_0(0)$ is a constant function on $[-\tau,0]$, then $M(\tau)$ in Eq.(\ref{Mtau}) satisfies
\begin{equation*}
\begin{split}
M(\tau) &=e^{-\gamma \tau}\big[\phi_0(0)+\frac{1}{\gamma}(pe^{-\mu \tau}\phi_0(0)e^{-a\phi_0(0)}-h)(e^{\gamma \tau}-1)\big]\\
 &<e^{-\gamma \tau}\big[\phi_0(0)+\frac{1}{\gamma}(p\phi_0(0)-h)(e^{\gamma \tau}-1)\big]\\
 &=e^{-\gamma \tau}\big\{[1+\frac{p}{\gamma}(e^{\gamma \tau}-1)]\phi_0(0)-\frac{h}{\gamma}(e^{\gamma \tau}-1)\big\}.
\end{split}
\end{equation*}
When the initial value satisfies $\phi_0(t)\equiv\phi_0(0)\le \dfrac{h(e^{\gamma \tau}-1)}{\gamma+p(e^{\gamma \tau}-1)}$, then $M(\tau)<0$. This completes the proof.
\end{proof}
\begin{claim}
 Eq.(\ref{m}) with $h>0$ has no positive equilibrium when $p>\gamma$ and $\tau\geq \frac{1}{\mu}\ln\frac{p}{\gamma}$.
\end{claim}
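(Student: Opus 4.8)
The plan is to reduce the statement to an elementary inequality for the defining equation of a positive equilibrium. A positive equilibrium $M^*>0$ of Eq.(\ref{m}) is a constant solution, so substituting $M(t)=M(t-\tau)=M^*$ and $dM/dt=0$ shows that $M^*$ must satisfy
$$pe^{-\mu\tau}M^*e^{-aM^*}=\gamma M^*+h.$$
First I would rewrite the hypothesis $\tau\ge\frac{1}{\mu}\ln\frac{p}{\gamma}$ in terms of the coefficients. Since $\mu>0$, this is equivalent to $\mu\tau\ge\ln(p/\gamma)$, i.e.\ $e^{\mu\tau}\ge p/\gamma$, i.e.\ $pe^{-\mu\tau}\le\gamma$. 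Thus the condition simply says that the effective (discounted) birth coefficient $pe^{-\mu\tau}$ does not exceed the mature death rate $\gamma$.

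Next I would estimate the left-hand side of the equilibrium equation from above. For any $M^*>0$ we have $e^{-aM^*}<1$ because $a>0$, hence
$$pe^{-\mu\tau}M^*e^{-aM^*}< pe^{-\mu\tau}M^*\le\gamma M^*,$$
where the last step uses $pe^{-\mu\tau}\le\gamma$ from the previous paragraph. On the other hand, since $h>0$, the right-hand side satisfies $\gamma M^*+h>\gamma M^*$. Combining the two bounds gives
$$pe^{-\mu\tau}M^*e^{-aM^*}<\gamma M^*<\gamma M^*+h$$
for every $M^*>0$, so the equilibrium equation can never hold for a positive $M^*$; this is the desired contradiction.

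There is essentially no obstacle in this argument: the whole proof rests on the trivial bound $e^{-aM}\le 1$ together with the reformulation of the delay threshold as a coefficient inequality. The only point worth flagging is the interpretation --- the threshold $\tau=\frac{1}{\mu}\ln(p/\gamma)$ is precisely the value at which the discounted reproduction rate $pe^{-\mu\tau}$ falls to the death rate $\gamma$; once the maturation delay is this large, reproduction is too weak (even before harvesting) to sustain a nonzero steady state, and any positive harvest $h>0$ then rules out a positive equilibrium outright. It may also be worth remarking that for $\tau$ slightly below this threshold one expects positive equilibria to reappear, which is consistent with the existence results (via the Lambert $W$ function) announced for small $h$ elsewhere in the paper.
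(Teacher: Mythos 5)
Your proof is correct and is essentially the paper's own argument in compressed form: the paper shows the function $g(M)=pe^{-\mu\tau}Me^{-aM}-\gamma M-h$ is negative for all $M>0$ at the boundary case $(\tau,h)=\left(\frac{1}{\mu}\ln\frac{p}{\gamma},\,0\right)$ (where $pe^{-\mu\tau}=\gamma$ and $e^{-aM}<1$ force $g<0$) and then extends this via the monotonicity $\partial g/\partial h<0$ and $\partial g/\partial\tau<0$, which is exactly the content of your direct chain $pe^{-\mu\tau}M^*e^{-aM^*}<\gamma M^*<\gamma M^*+h$ under $pe^{-\mu\tau}\le\gamma$. There is no gap; if anything, your single inequality chain is a slightly cleaner packaging of the same two facts the paper invokes.
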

\begin{proof}
Denote
$$
g=pMe^{-a M}e^{-\mu\tau}-\gamma M-h.
$$
Since $\frac{\partial g}{\partial h}=-1<0$, $\frac{\partial g}{\partial\tau}=-\mu pMe^{-aM}e^{-\mu\tau}<0$
 and $g|_{(\tau,h)=(\frac{1}{\mu}\ln\frac{p}{\gamma},0)}<0$ for $M>0$, we have that $g(\tau,h)<0$ for
$\tau>\frac{1}{\mu}\ln\frac{p}{\gamma}$, $h>0$ and $M>0$. Therefore, the claim holds.
\end{proof}

Building upon the above claim, we make the following assumption
$$
(H_1).~~~~~~~~~p>\gamma,~~ \tau<\frac{1}{\mu}\ln\frac{p}{\gamma}.
$$
Clearly, under assumption $(H_1)$ and $h=0$, the model (\ref{m}) admit two equilibria $M_*=\frac{1}{a}(\ln\frac{p}{\gamma}-\mu\tau)>0$ and
$M_0=0$. The following theorem establishes sufficient conditions under which system (\ref{m}) admits two positive equilibria.
\begin{theorem}\label{hlarge}
If $(H_1)$ is satisfied, then there exists $h^*>0$ such that\\
(i) two positive equilibria exist in system (\ref{m}) for $h<h^*$, \\
(ii) a unique positive equilibrium exists in system (\ref{m}) for $h=h^*$, \\
(iii) no positive equilibrium exists in system (\ref{m}) for $h>h^*$,\\
where
\begin{equation}\label{hW}
\begin{split}
&h^*=pe^{-\mu\tau}\overline{M}e^{-a\overline{M}}-\gamma \overline{M},\\
&\overline{M}=\frac{1}{a}[1-W(\frac{\gamma}{p}e^{1+\mu\tau})],
\end{split}
\end{equation}
and $W(\cdot)$ is Lambert W function.
\end{theorem}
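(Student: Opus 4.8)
The plan is to reduce the existence of positive equilibria to a one-variable root-counting problem. A positive number $M$ is an equilibrium of (\ref{m}) precisely when it solves $g(M)=h$, where I set $g(M):=pe^{-\mu\tau}Me^{-aM}-\gamma M$. Thus everything reduces to understanding the graph of $g$ on $(0,\infty)$ and intersecting it with the horizontal line at height $h$. First I would record the endpoint behaviour: $g(0)=0$ and $g(M)\to-\infty$ as $M\to\infty$ (the exponential term vanishes while $-\gamma M$ dominates). Next I would differentiate, $g'(M)=pe^{-\mu\tau}e^{-aM}(1-aM)-\gamma$, and use $(H_1)$ to obtain $g'(0)=pe^{-\mu\tau}-\gamma>0$, since $\tau<\frac{1}{\mu}\ln\frac{p}{\gamma}$ forces $pe^{-\mu\tau}>\gamma$. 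Hence $g$ starts at $0$, rises, and eventually falls to $-\infty$.

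The crux is to show that $g$ has exactly one critical point, which is therefore its global maximum. I would first note that for $M\ge 1/a$ the factor $1-aM\le 0$ makes $g'(M)\le-\gamma<0$, so no critical point lies there. On $(0,1/a)$ I would show $g'$ is strictly decreasing, either by computing $g''(M)=a\,pe^{-\mu\tau}e^{-aM}(aM-2)<0$ on this interval, or by checking directly that $e^{-aM}(1-aM)$ is decreasing. Since $g'(0)>0$ and $g'(1/a)=-\gamma<0$, the intermediate value theorem yields a unique zero $\overline{M}\in(0,1/a)$. To exhibit $\overline{M}$ explicitly I would substitute $u=1-aM$ into $g'(M)=0$; this turns the equation into $ue^{u}=\frac{\gamma}{p}e^{1+\mu\tau}$, whose solution is $u=W(\frac{\gamma}{p}e^{1+\mu\tau})$ by the defining property of the Lambert W function (the argument is positive, so the principal branch applies and $u>0$, consistent with $\overline{M}<1/a$). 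This gives $\overline{M}=\frac{1}{a}[1-W(\frac{\gamma}{p}e^{1+\mu\tau})]$ as claimed.

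Setting $h^{*}:=g(\overline{M})=pe^{-\mu\tau}\overline{M}e^{-a\overline{M}}-\gamma\overline{M}$, the sign of $g'$ shows that $g$ is strictly increasing on $(0,\overline{M})$ and strictly decreasing on $(\overline{M},\infty)$, so $h^{*}>g(0)=0$ is the global maximum of $g$. Finally I would count the roots of $g(M)=h$ for $h>0$ by the intermediate value theorem. For $0<h<h^{*}$ each strictly monotone branch crosses height $h$ once, giving one root in $(0,\overline{M})$ and one in $(\overline{M},\infty)$, hence two positive equilibria (both strictly positive, since $h>0=g(0)$); this is case (i). For $h=h^{*}$ the line meets the graph only at the peak $\overline{M}$, giving the unique equilibrium of case (ii). For $h>h^{*}$ we have $g(M)\le h^{*}<h$ for every $M>0$, so there is no solution, which is case (iii). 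I expect the monotonicity of $g'$ on $(0,1/a)$, equivalently the uniqueness of the maximum, to be the only nonroutine step; the Lambert W identification and the three-case count then follow immediately.
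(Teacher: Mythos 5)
Your proposal is correct and follows essentially the same route as the paper: both reduce the equilibrium problem to counting roots of the scalar function $g(M)=pe^{-\mu\tau}Me^{-aM}-\gamma M$ against the level $h$, identify the unique critical point $\overline{M}=\frac{1}{a}\left[1-W\left(\frac{\gamma}{p}e^{1+\mu\tau}\right)\right]$ via the substitution $u=1-a\overline{M}$ and the Lambert $W$ function, and settle the three cases by comparing $h$ with the maximum value $h^*=g(\overline{M})$. Your explicit monotonicity argument ($g''<0$ on $(0,1/a)$ and $g'\le-\gamma$ on $[1/a,\infty)$) replaces the paper's direct appeal to uniqueness of the Lambert $W$ solution and is in fact slightly more careful, since the paper's stated bound $\frac{\gamma}{p}e^{1+\mu\tau}<1$ should read $<e$ (this is what $(H_1)$ actually gives, and what ensures $W(\frac{\gamma}{p}e^{1+\mu\tau})<1$, hence $\overline{M}>0$).
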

\begin{proof}
Denote
$$
g_1(M)=pe^{-\mu\tau}Me^{-aM},~~g_2(M)=\gamma M+h.
$$
Then $g=g_1-g_2$. From $g|_{M=0}=-h<0$, $\lim_{M\rightarrow\infty}g=-\infty$ and $g'(M)|_{M=0}=pe^{-\mu\tau}-\gamma>0$, then
there exists $\overline{M}\in (0,\infty)$ such that $g(\overline{M})$ is a local maximum of $g$.
Then $g'_1(\overline{M})=g'_2(\overline{M})$, and hence,
$$
pe^{-\mu\tau}e^{-a\overline{M}}(1-a\overline{M})=\gamma.
$$
That is
\begin{equation}\label{Mbar}
(1-a\overline{M})e^{1-a\overline{M}}=\frac{\gamma}{p}e^{1+\mu\tau}.
\end{equation}
The assumption $(H_1)$ implies that $\frac{\gamma}{p}e^{1+\mu\tau}<1$. Hence, with aid of the property of Lambert W function,

We thereby establish that Eq.(\ref{Mbar}) admits a unique solution, explicitly defined by $\overline{M}=\frac{1}{a}[1-W(\frac{\gamma}{p}e^{1+\mu\tau})],$ which implies that $\overline{M}$ is the unique extremum of $g$, and $g(\overline{M})$ is
the maximum of $g$ on $(0,\infty)$. Hence, from $g|_{M=0}=-h<0$, $\lim_{M\rightarrow\infty}g=-\infty$ and $g'(M)|_{M=0}=pe^{-\mu\tau}-\gamma>0$, it follows that $g'(M)>0$ for $M\in(0,\overline{M})$, and $g'(M)<0$ for $M\in(\overline{M},\infty)$. Meanwhile, $g(\overline{M})=0$ if and only if $h=h^*$
defined as in (\ref{hW}). Thus, $g(\overline{M})>0$ when $h\in[0,h^*)$, and $g(\overline{M})<0$ when $h\in(h^*,\infty)$. Therefore,
we can draw the conclusion that two positive equilibria exist in system (\ref{m}) when $h\in[0,h^*)$, as is shown in Fig.\ref{fig1}(a), a unique positive equilibrium exists in system (\ref{m}) when $h=h^*$, as is shown in Fig.\ref{fig1}(b), and no positive equilibrium exists in system (\ref{m}) when $h>h^*$, as is shown in Fig.\ref{fig1}(c).
\end{proof}

Figure 1 displays different number of positive equilibria of system (\ref{m}) under different conditions. The parameters are chosen as $\mu=0.1, p=2, a=0.1, \gamma=1, \tau=1$, with $h=0.8$ in (a),  $h=1.030116$ in (b) and $h=1.2$ in (c). Figure 2 shows the equilibria of system (\ref{m}) for $h\in[0,1.2]$, the green curve represents the positive equilibrium $M_*(h)$, the red curve represents the positive equilibrium $M_0(h)$. Other parameters in Figure 2 are the same as those in Figure 1.
\begin{figure}[htbp]
  \centering
  \begin{subfigure}[b]{0.32\textwidth} 
    \includegraphics[width=\textwidth]{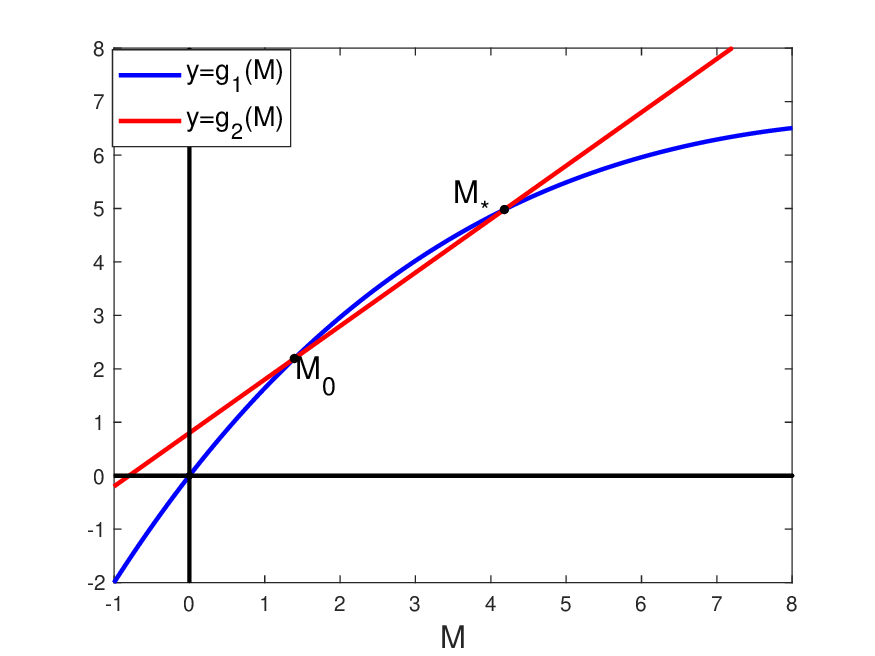}
    \caption{} 
    \label{fig:1a}
  \end{subfigure}
  \begin{subfigure}[b]{0.32\textwidth}
    \includegraphics[width=\textwidth]{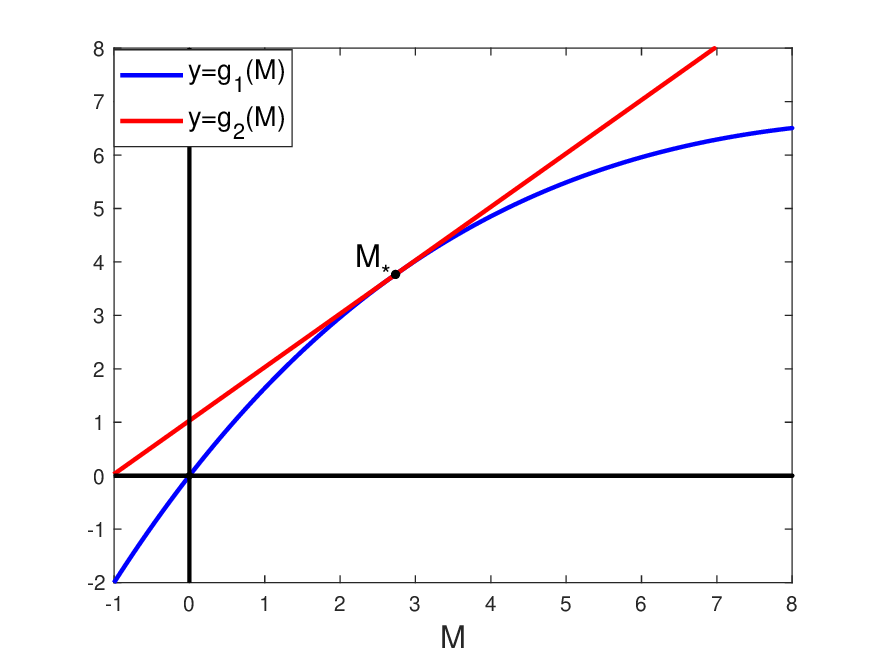}
    \caption{}
    \label{fig:1b}
  \end{subfigure}
  \begin{subfigure}[b]{0.32\textwidth}
    \includegraphics[width=\textwidth]{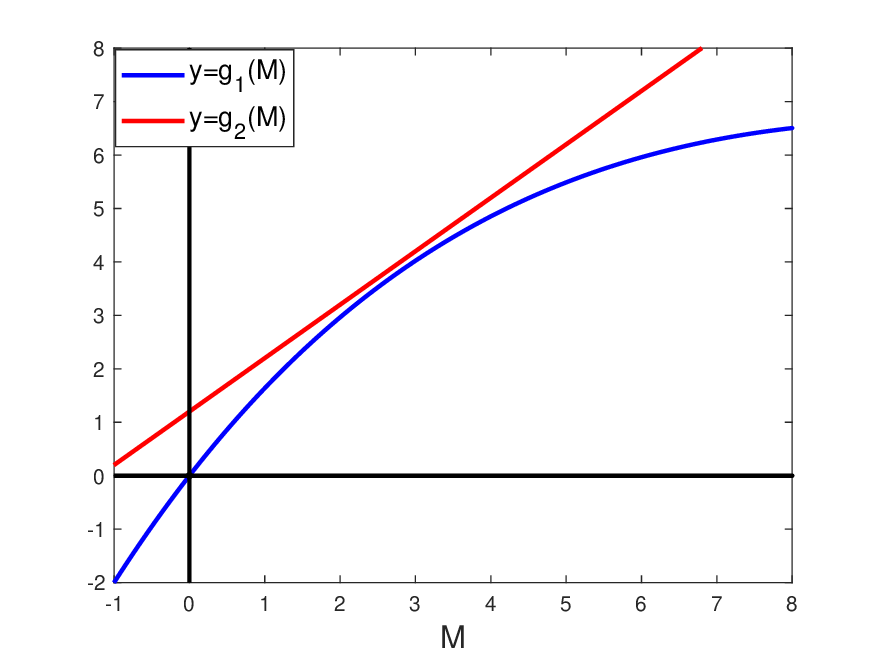}
    \caption{}
    \label{fig:1c}
  \end{subfigure}
 \caption{$(a)$ Two positive equilibria in system (\ref{m}), $(b)$ a unique positive equilibrium in system (\ref{m}), $(c)$  no positive equilibrium in system (\ref{m}).}
\label{fig1}
\end{figure}

\begin{figure}[!ht]
\centering
\begin{tabular}{ccc}
\includegraphics[height=5.2cm,width=7.5cm]{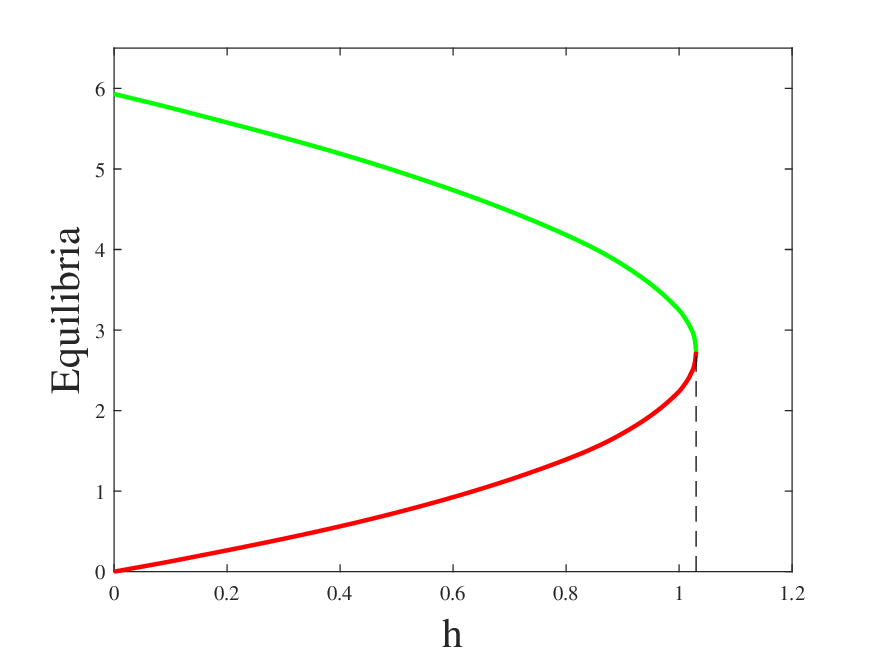}
\end{tabular}
\caption{The diagram of equilibria in system (\ref{m}) for $h>0$.}
\label{fig2}
\end{figure}

\begin{remark}\label{rem1}
According to Theorem \ref{hlarge}, for $h\in [0, h_*)$, system (\ref{m}) has two positive equilibria. We denote the equilibria of system (\ref{m}) on $[0,h_*)$ as $M_*(h)$ and $M_0(h)$.
The proof of Theorem \ref{hlarge} ensures that $M_0(h)<\overline{M}<M_*(h)$. By the implicit function theorem, we know that $M_*(h)$ and $M_0(h)$ are both $C^1$ with respect to $h$. Notice that $M_*$ and $M_0$ are also functions of $\tau$, it can be proved that $M_*$ and $M_0$ are both $C^1$ with respect to $\tau$.
\end{remark}

\section{Stability and Bifurcation Analysis}\label{sec3}

Building upon the analysis in the preceding section, we rigorously establish that under assumption $(H_1)$, for $h\in[0,h^*)$, system (\ref{m}) sustains two equilibria, says
$M_0$ and $M_*$, with $M_0<\overline{M}<M_*$. In this section, we shall study the stability of the equilibria and the existence of Hopf bifurcation at $M_*$.
\subsection{Stability of equilibria and existence of Hopf bifurcation}


To analyze the stability of $M_0$, we derive the following theorem.
\begin{theorem}
If $(H_1)$ and $h<h^*$ are satisfied,  $M_0$ is unstable.
\end{theorem}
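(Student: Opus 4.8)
The plan is to apply the principle of linearized stability: I will linearize Eq.~(\ref{m}) at $M_0$, write down the associated characteristic equation, and exhibit a real positive root, which forces instability. The delay should not obstruct this, because a single positive real root can be detected purely from a sign change.

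First I would set $M(t)=M_0+u(t)$, drop the constant $-h$ together with the higher-order terms, and obtain the linearized equation
\begin{equation*}
\frac{du(t)}{dt}=A\,u(t-\tau)-\gamma u(t),\qquad A:=pe^{-\mu\tau}e^{-aM_0}(1-aM_0),
\end{equation*}
where $A=g_1'(M_0)$ in the notation of the proof of Theorem~\ref{hlarge}. Substituting $u(t)=e^{\lambda t}$ gives the characteristic equation
\begin{equation*}
f(\lambda):=\lambda+\gamma-Ae^{-\lambda\tau}=0.
\end{equation*}

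The decisive step is to observe that the coefficient $A$ exceeds $\gamma$. By Remark~\ref{rem1} we have $M_0<\overline{M}$, and the proof of Theorem~\ref{hlarge} shows $g'(M)>0$ on $(0,\overline{M})$; since $g'(M)=g_1'(M)-\gamma$, evaluating at $M_0$ yields $A-\gamma=g'(M_0)>0$, i.e. $A>\gamma>0$. In particular $1-aM_0>0$, consistent with $A>0$, because $a\overline{M}<1$ and $M_0<\overline{M}$. I would then locate a positive real root of $f$: restricted to real arguments $f$ is continuous with $f(0)=\gamma-A<0$, while $f(\lambda)\to+\infty$ as $\lambda\to+\infty$ since the linear term dominates the vanishing exponential $Ae^{-\lambda\tau}$. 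The intermediate value theorem supplies $\lambda_0>0$ with $f(\lambda_0)=0$ (uniqueness follows at once from $f'(\lambda)=1+A\tau e^{-\lambda\tau}>0$, though it is not needed). Hence the characteristic equation has a root with positive real part, the zero solution of the linearization grows like $e^{\lambda_0 t}$, and $M_0$ is unstable.

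I do not anticipate a genuine obstacle: the entire argument reduces to a sign change of $f$ on $[0,\infty)$, so no analysis of the full complex spectrum is required. The only point demanding care is the sign bookkeeping that identifies the linearization coefficient with $g_1'(M_0)$ and thereby ties the inequality $A>\gamma$ to the already-established geometric fact $M_0<\overline{M}$.
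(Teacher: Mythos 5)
Your proposal is correct and follows essentially the same route as the paper's own proof: linearize at $M_0$, use $g'(M_0)>0$ (i.e.\ $A>\gamma$) to get $\Delta(0)<0$, and conclude from $\Delta(\lambda)\to+\infty$ via the intermediate value theorem that a positive real characteristic root exists. Your added observations (uniqueness of the real root and $1-aM_0>0$ from $a\overline{M}<1$) are correct but not needed; otherwise the argument matches the paper step for step.
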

\begin{proof}
By theorem \ref{hlarge}, it follows that $M_0$ exists. The linearized equation of system (\ref{m}) at $M=M_0$ yields the following equation
\begin{equation}\label{mlinear0}
\frac{dM(t)}{dt}=pe^{-\mu\tau}e^{-a M_0}(1-aM_0)M(t-\tau)-\gamma M(t),
\end{equation}
the characteristic equation for $M_0$ is
\begin{equation}\label{M0TZ}
\Delta(\lambda):=\lambda+\gamma-e^{-\lambda\tau}pe^{-\mu\tau}(1-aM_0)e^{-aM_0}=0.
\end{equation}
By the Proof of Theorem \ref{hlarge} we have $g'(M_0)=pe^{-\mu\tau}e^{-aM_0}(1-aM_0)-\gamma>0$,
which implies that
$$
\Delta(0)=\gamma-pe^{-\mu\tau}(1-aM_0)e^{-aM_0}<0.
$$
Obviously, $\lim_{\lambda\to+\infty}{\Delta(\lambda)}=+\infty.$ Thus, there exists $\tilde{\lambda}>0$ such that $\Delta(\tilde{\lambda})=0$, which means that Eq.(\ref{M0TZ}) always has a positive root. Therefore, $M_0$ is unstable.
\end{proof}

Then, the local stability of $M_*$ will be investigated. Clearly, the linearization of (\ref{m}) at $M=M_*$ is given by
\begin{equation}\label{3}
\frac{dM(t)}{dt}=pe^{-\mu\tau}e^{-aM_*}(1-aM_*)M(t-\tau)-\gamma M(t),
\end{equation}
and the characteristic equation is
\begin{equation}\label{4}
\lambda+\gamma-e^{-\lambda\tau}pe^{-\mu\tau}e^{-aM_*}(1-aM_*)=0.
\end{equation}
By the Proof in Theorem \ref{hlarge} we have learned that $g'(M_*)=pe^{-\mu\tau}e^{-aM_*}(1-aM_*)-\gamma<0,$ which means
\begin{equation}\label{MsCon}
pe^{-\mu\tau}e^{-aM_*}(1-aM_*)<\gamma.
\end{equation}
Then we have the following theorem.
\begin{theorem}
For the system (\ref{m}), if $(H_1)$ and $h\in [0, h^*)$, then $M_*$ is locally asymptotically stable for $\tau=0$.
\end{theorem}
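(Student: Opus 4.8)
The plan is to exploit the fact that at $\tau=0$ the delay disappears entirely, so that the characteristic equation (\ref{4}) collapses from a transcendental equation into a first-degree polynomial in $\lambda$ with a single, explicitly computable root. The stability question then reduces to checking the sign of that root.

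First I would substitute $\tau=0$ directly into (\ref{4}). Since $e^{-\lambda\tau}=1$ and $e^{-\mu\tau}=1$ when $\tau=0$, the characteristic equation reduces to the linear equation
$$
\lambda+\gamma-pe^{-aM_*}(1-aM_*)=0,
$$
whose unique root is $\lambda=pe^{-aM_*}(1-aM_*)-\gamma$. Here $M_*=M_*(0)$ is the larger positive equilibrium guaranteed by Theorem \ref{hlarge} and Remark \ref{rem1} under $(H_1)$ and $h\in[0,h^*)$; I would note in passing that $\tau=0$ indeed satisfies $(H_1)$ whenever $p>\gamma$, so the equilibrium $M_*(0)$ and the threshold $h^*$ are well defined in this limiting case.

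Next I would invoke the sign condition (\ref{MsCon}). Because $M_*$ lies on the decreasing branch of $g$, that is $M_*>\overline{M}$ as established in the proof of Theorem \ref{hlarge} and recorded in Remark \ref{rem1}, we have $g'(M_*)<0$, which at $\tau=0$ reads precisely $pe^{-aM_*}(1-aM_*)<\gamma$. In fact the single root above equals $g'(M_*)\big|_{\tau=0}$, so the negativity of the root is nothing more than the restatement of $g'(M_*)<0$. Substituting gives $\lambda<0$, and since the characteristic polynomial has exactly one root, $M_*$ is locally asymptotically stable at $\tau=0$.

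I do not anticipate a genuine obstacle here; the only point requiring care is to confirm that (\ref{MsCon}) remains valid at $\tau=0$, i.e. that the geometric picture ``$M_*$ sits to the right of the maximizer $\overline{M}$'' from the proof of Theorem \ref{hlarge} persists in the limiting case. This is immediate because $(H_1)$ holds at $\tau=0$ and the argument producing $g'(M_*)<0$ nowhere uses $\tau>0$, so the conclusion $\lambda=g'(M_*)\big|_{\tau=0}<0$ follows at once.
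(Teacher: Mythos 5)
Your proof is correct and matches the paper's intent exactly: the paper states this theorem without a written proof, having just derived the inequality (\ref{MsCon}) $pe^{-\mu\tau}e^{-aM_*}(1-aM_*)<\gamma$ from $g'(M_*)<0$, and your argument—that at $\tau=0$ the characteristic equation (\ref{4}) reduces to the linear equation with unique root $\lambda=pe^{-aM_*}(1-aM_*)-\gamma=g'(M_*)\big|_{\tau=0}<0$—is precisely the intended justification. Your added care about $(H_1)$ holding at $\tau=0$ and (\ref{MsCon}) persisting there is sound and fills in what the paper leaves implicit.
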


Following the approach of Beretta and Kuang \cite{Beretta2002}, we analyze the distribution of the roots of (\ref{4}) in the complex plane.
Denote
$$
\lambda+\gamma:=\tilde{P}(\lambda,\tau),~~~-pe^{-\mu\tau}e^{-aM_*}(1-aM_*):=\tilde{Q}(\lambda,\tau),
$$
then the following properties for $\tau\in[0,\frac{1}{\mu}\ln(\frac{p}{\gamma}))$ should be verified.
\begin{enumerate}
\item[(i)]  $\tilde{Q}(0,\tau)+\tilde{P}(0,\tau)\neq 0$;
\item[(ii)] $\tilde{Q}({\rm i}\omega,\tau)+\tilde{P}({\rm i}\omega,\tau)\neq 0$;
\item[(iii)] $\lim_{|\lambda|\to\infty}\sup \left\{\left|\frac{\tilde{Q}(\lambda,\tau)}{\tilde{P}(\lambda,\tau)}\right|:{ Re}\lambda\ge 0\right\}<1$;
\item[(iv)] $F(\omega,\tau)=|\tilde{P}({\rm i}\omega,\tau)|^2-|\tilde{Q}({\rm i}\omega,\tau)|^2$ possesses only a finite number of zeros;
\item[(v)] Each positive zero $\omega(\tau)$ of $F(\omega,\tau)=0$ is differentiable in $\tau$ whenever it exists.
\end{enumerate}
\begin{proof}
From Eq.(\ref{MsCon}), we can deduce that
 $$\tilde{Q}(0,\tau)+\tilde{P}(0,\tau)=\gamma-pe^{-\mu\tau}(1-aM_*)e^{-aM_*}>0,$$
$$\tilde{Q}({\rm i}\omega,\tau)+\tilde{P}({\rm i}\omega,\tau)={\rm i}\omega+\gamma-pe^{-\mu\tau}(1-aM_*)e^{-aM_*}\neq 0.$$
Besides, $\lim_{|\lambda|\to\infty}{|\frac{\tilde{Q}(\lambda,\tau)}{\tilde{P}(\lambda,\tau)}}|=\lim_{|\lambda|\to\infty}|\frac{pe^{-\mu\tau}(1-aM_*)e^{-aM_*}}{\lambda+\gamma}|=0<1$. Thus, properties (i)-(iii) are satisfied.

 We can also get that
 $$
 F(\omega,\tau)=|\tilde{P}({\rm i}\omega,\tau)|^2-|\tilde{Q}({\rm i}\omega,\tau)|^2=\omega^2+\gamma^2-p^2e^{-2\mu\tau}(1-aM_*)^2e^{-2aM_*}
 $$ has at most two zeros as a function of $\omega$. Therefore, property (iv) is satisfied. Besides, by the implicit function theorem, property (v) also holds.
\end{proof}

 Now, by choosing $\tau$ as a varying parameter, we will investigate the existence of Hopf bifurcation at $M_*$ under the assumption $(H_1)$ .

Let ${\rm i}\omega (\omega>0)$ be a purely imaginary root of (\ref{4}), then $\omega$ satisfies
\begin{equation}\label{Omega}
\begin{cases}
\sin\omega\tau=\dfrac{-\omega}{pe^{-\mu\tau}(1-aM_*)e^{-aM_*}},\\
\cos\omega\tau=\dfrac{\gamma}{pe^{-\mu\tau}(1-aM_*)e^{-aM_*}}.
\end{cases}
\end{equation}
We square and sum the two equations in Eq.(\ref{Omega}), it can be concluded that $|\tilde{P}({\rm i}\omega,\tau)|^2-|\tilde{Q}({\rm i}\omega,\tau)|^2=0$. Which means that $\omega$ should be a positive zero of
$$
F(\omega,\tau)=\omega^2+\gamma^2-p^2e^{-2\mu\tau}(1-aM_*)^2e^{-2aM_*}.
$$
 Apparently, $F(\omega,\tau)$ has a positive zero $\omega$ if and only if
\begin{equation}\label{gam2}
\gamma^2<p^2e^{-2\mu\tau}(1-aM_*)^2e^{-2aM_*}.
\end{equation}
Combining Eq.\eqref{MsCon} and \eqref{gam2}, we can easily obtain the fact that  $1-aM_*<0$.

 Define the function
 $$
 I(\tau):=pe^{-\mu\tau}(aM_*-1)e^{-aM_*}-\gamma,
 $$
 we have the following conclusion.

\begin{remark}\label{DI}
If $(H_1)$ and $h<h^*$ are satisfied, then $I(\tau)$ is a decreasing function.
\end{remark}
\begin{proof}
  Since $M_*$ is a positive equilibrium of system \eqref{m},  we have
\begin{equation}\label{Mstar}
pM_*e^{-a M_*}e^{-\mu\tau}=\gamma M_*+h,
\end{equation}
 thus $I(\tau)$ can be rewritten as
 $$
 I(\tau)=(\gamma+\frac{h}{M_*})(aM_*-1)-\gamma.
 $$
 By taking derivative of  $I(\tau)$ with respect to $\tau$, we have
 $$
 I'(\tau)=\frac{hM_*'(\tau)}{M_*^2}+\gamma a M_*'(\tau).
 $$
With aid of Eq.\eqref{Mstar} and the implicit function theorem, it can be proven that $M_*'(\tau)<0$. Thus $I'(\tau)<0$, we obtain that $I(\tau)$ is a decreasing function of $\tau$.

\end{proof}

Denote the set
 \begin{equation}\label{scrI}
   \mathscr{I}=\{\tau\ge 0 : I(\tau)>0\},
 \end{equation}
  Assume that the set $\mathscr{I}\neq \emptyset$, then we get that for  $\forall \tau\in \mathscr{I}$,  a corresponding unique $\omega(\tau)>0$ exists. For any $\tau\in \mathscr{I}$, define $\theta(\tau)\in[0, 2\pi)$ as follows
\begin{equation}\label{Theta}
\begin{cases}
\sin\theta(\tau)=\dfrac{-\omega(\tau)}{pe^{-\mu\tau}(1-aM_*)e^{-aM_*}},\\
\cos\theta(\tau)=\dfrac{\gamma}{pe^{-\mu\tau}(1-aM_*)e^{-aM_*}},
\end{cases}
\end{equation}
then $\theta(\tau)\in (\frac{\pi}{2},\pi) $ by the fact that $1-aM_*<0$. From Eq.\eqref{Theta}, we can draw that $\theta(\tau)$ is unique and well defined for all $\tau \in \mathscr{I}$.

We can also verify that ${\rm i}\omega(\tau) $ is the purely imaginary root of (\ref{4}) when $\tau$ is the zero of the functions $S_n(\tau)$, while
\begin{equation}\label{Sm}
S_n(\tau)=\tau-\dfrac{\theta(\tau)+2n\pi}{\omega(\tau)},~~\tau\in \mathscr{I},~~n\in \mathbb{N}_0.
\end{equation}

From Eq.\eqref{Sm}, we notice that for $\forall\tau\in \mathscr{I}$, $S_n(\tau)>S_{n+1}(\tau)$. As to $S_0(\tau)$, we can obtain the following conclusions.
\begin{lemma}\label{lemma3.1}
 If  $\mathscr{I}$ is nonempty, then the function $S_0(\tau)$ defined in \eqref{Sm} satisfies  $S_0(0)<0$.
\end{lemma}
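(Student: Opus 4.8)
The plan is to compute $S_0$ directly at the endpoint $\tau=0$, since the definition \eqref{Sm} collapses there to a single quotient whose sign is transparent. The first thing I would establish is that $S_0(0)$ is even well-defined under the hypothesis $\mathscr{I}\neq\emptyset$; this amounts to checking that $0\in\mathscr{I}$. Here I would invoke Remark \ref{DI}, which guarantees that $I(\tau)$ is (strictly) decreasing. Consequently the superlevel set $\mathscr{I}=\{\tau\ge 0 : I(\tau)>0\}$ is anchored at the left endpoint: concretely, if $I(\tau_0)>0$ for some $\tau_0\ge 0$, then monotonicity gives $I(0)\ge I(\tau_0)>0$, so $0\in\mathscr{I}$. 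In particular the associated quantities $\omega(0)>0$ and $\theta(0)$ furnished by \eqref{Theta} both exist, and $S_0(0)$ makes sense.

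With well-definedness secured, I would simply substitute $\tau=0$ and $n=0$ into \eqref{Sm}, obtaining
\begin{equation*}
S_0(0)=0-\frac{\theta(0)+0}{\omega(0)}=-\frac{\theta(0)}{\omega(0)}.
\end{equation*}
The sign now follows from two facts already recorded in the excerpt: $\omega(\tau)>0$ for every $\tau\in\mathscr{I}$, and $\theta(\tau)\in(\frac{\pi}{2},\pi)$, the latter being a consequence of $1-aM_*<0$ together with the defining relations \eqref{Theta}. Evaluating at $\tau=0$ yields $\theta(0)>\frac{\pi}{2}>0$ and $\omega(0)>0$, whence $S_0(0)=-\theta(0)/\omega(0)<0$.

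There is essentially no computational difficulty here; the entire substance of the argument is the bookkeeping that makes $S_0(0)$ meaningful. The one point that genuinely requires the earlier machinery — and which I regard as the crux — is the reduction ``$\mathscr{I}\neq\emptyset\Rightarrow 0\in\mathscr{I}$,'' which rests on the monotonicity of $I$ established in Remark \ref{DI}. Once $0\in\mathscr{I}$ is known, the explicit value $S_0(0)=-\theta(0)/\omega(0)$ and the range constraint $\theta(0)\in(\frac{\pi}{2},\pi)$ deliver the conclusion immediately.
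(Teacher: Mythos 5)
Your proof is correct and follows essentially the same route as the paper's: both arguments reduce the lemma to showing $0\in\mathscr{I}$ via the monotonicity of $I(\tau)$ from Remark \ref{DI}, then evaluate $S_0(0)=-\theta(0)/\omega(0)<0$ using $\theta(0)\in(\frac{\pi}{2},\pi)$ and $\omega(0)>0$. Your unified inequality $I(0)\ge I(\tau_0)>0$ is a slightly cleaner packaging of the paper's case split between $\tau_*=0$ and $\tau_*>0$, but the substance is identical.
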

\begin{proof}
By $\mathscr{I}\neq \emptyset$, there exist $\tau_*\in \mathscr{I}$ so that $I(\tau_*)>0$, and thus $\omega(\tau_*)>0$. If $\tau_*=0$, then $0\in\mathscr{I}$
and $\omega(0)>0$. If $\tau_*>0$, then by Remark \ref{DI}, we have $I(0)>0$. This implies that $\tau=0\in \mathscr{I}$ and $\omega(0)>0$. Thus, by $S_0(\tau)=\tau-\frac{\theta(\tau)}{\omega(\tau)}$ we have $S_0(0)=-\frac{\theta(0)}{\omega(0)}$, combining the fact $\theta(\tau)\in (\frac{\pi}{2},\pi)$ leads to $S_0(0)<0$.
\end{proof}

\begin{lemma}\label{lemma3.2}
 If  $\mathscr{I}\neq \emptyset$,  for $S_0(\tau)$ defined in \eqref{Sm}, a number $\tau_{max}>0$ exists such that $S_0(\tau)\to -\infty$ when $\tau\to \tau_{max}^-$.
\end{lemma}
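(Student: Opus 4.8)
The plan is to take $\tau_{max}:=\sup\mathscr{I}$ and to prove, in order, that this number is finite and positive, that $\omega(\tau)\to 0^+$ as $\tau\to\tau_{max}^-$, and that this collapse of $\omega$ drags $S_0(\tau)$ to $-\infty$ because $\theta(\tau)$ stays bounded away from $0$.

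First I would determine the shape of $\mathscr{I}$. Since $\mathscr{I}\neq\emptyset$, the argument already used in Lemma~\ref{lemma3.1} gives $I(0)>0$, so $0\in\mathscr{I}$. By Remark~\ref{DI}, $I$ is continuous and strictly decreasing on the admissible range $\tau\in[0,\frac1\mu\ln\frac p\gamma)$, which is bounded; hence $\tau_{max}=\sup\mathscr{I}$ is finite and $\mathscr{I}=[0,\tau_{max})$ with $I>0$ there. To see that $\tau_{max}$ is a genuine zero of $I$ approached through positive values, I would note that $\tau_{max}$ cannot be the point where $M_*$ disappears: at that fold the two equilibria coalesce at $M_*=\overline{M}$, where $a\overline{M}=1-W(\frac\gamma p e^{1+\mu\tau})<1$ and therefore $I<0$, which is incompatible with $I>0$ on all of $[0,\tau_{max})$. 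Consequently $M_*$ still exists on a neighbourhood of $\tau_{max}$, $I$ is continuous there, and $I(\tau_{max})=0$ with $I(\tau)\to 0^+$ as $\tau\to\tau_{max}^-$.

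Next I would make $\omega(\tau)$ explicit. On $\mathscr{I}$ we have $aM_*>1$, so $pe^{-\mu\tau}(aM_*-1)e^{-aM_*}=I(\tau)+\gamma$, and since $\omega(\tau)$ is the positive root of $F(\omega,\tau)=\omega^2+\gamma^2-p^2e^{-2\mu\tau}(1-aM_*)^2e^{-2aM_*}=0$ this yields
\[
\omega(\tau)^2=\big(I(\tau)+\gamma\big)^2-\gamma^2=I(\tau)\big(I(\tau)+2\gamma\big).
\]
Thus $\omega(\tau)\to 0^+$ precisely when $I(\tau)\to 0^+$, which the previous step has already established at $\tau_{max}^-$.

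Finally I would conclude from $S_0(\tau)=\tau-\theta(\tau)/\omega(\tau)$. Because $\theta(\tau)\in(\frac\pi2,\pi)$, it is bounded and, crucially, bounded below by $\frac\pi2>0$; hence $\theta(\tau)/\omega(\tau)\to+\infty$ as $\omega(\tau)\to 0^+$, while $\tau$ remains bounded by $\tau_{max}$, so $S_0(\tau)\to-\infty$. The main obstacle is the first step: showing that $\tau_{max}$ is finite and that $I$ decreases continuously to the value $0$, rather than staying positive throughout the range or leaving the domain of definition with a negative limit. This is exactly where the monotonicity of $I$ from Remark~\ref{DI}, the boundedness of the $\tau$-interval, and the sign of $I$ at the fold $M_*=\overline{M}$ must be combined; everything afterwards is a one-line limit.
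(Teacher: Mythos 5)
Your proof is correct, and while it shares the paper's overall skeleton (identify $\tau_{max}$ as the right endpoint of $\mathscr{I}$ with $I(\tau)\to 0^+$, deduce $\omega(\tau)\to 0^+$, and let the bounded $\theta(\tau)\in(\frac{\pi}{2},\pi)$ over a vanishing $\omega$ drive $S_0\to-\infty$), it differs in two substantive ways. First, you close a gap the paper glosses over: the paper simply asserts that $\tau_{max}\in\partial\mathscr{I}$ with $\tau_{max}\notin\mathscr{I}$ forces $I(\tau)\to 0^+$, which tacitly excludes the possibility that $\mathscr{I}$ is truncated by the disappearance of $M_*$ at the fold $\bar{\tau}$ rather than by a zero of $I$. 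Your observation that at the fold $M_*=\overline{M}$ with $a\overline{M}=1-W(\frac{\gamma}{p}e^{1+\mu\tau})<1$, hence $I<-\gamma<0$ there, shows $\sup\mathscr{I}<\bar{\tau}$ and that $I$ genuinely decreases through zero at an interior point; this is a real improvement in rigor. Second, you replace the paper's trigonometric limit chain ($\cos\theta(\tau)\to -1^+$, hence $\theta(\tau)\to\pi^-$ and $\sin\theta(\tau)\to 0^+$, hence $\omega(\tau)\to 0^+$ via \eqref{Theta}) with the direct algebraic identity $\omega(\tau)^2=(I(\tau)+\gamma)^2-\gamma^2=I(\tau)(I(\tau)+2\gamma)$, which makes the equivalence $\omega\to 0^+\Leftrightarrow I\to 0^+$ immediate and, as you note, only requires $\theta$ bounded below by $\frac{\pi}{2}$ rather than the sharper limit $\theta\to\pi^-$. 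One pedantic caveat: your opening claim that $I$ is monotone on all of $[0,\frac{1}{\mu}\ln\frac{p}{\gamma})$ is imprecise for $h>0$, since Remark~\ref{DI} only applies where $M_*$ exists, i.e.\ on $[0,\bar{\tau})$; your subsequent fold argument repairs this, but the domain should be stated as $[0,\bar{\tau})$ from the start.
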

\begin{proof}
From Remark \ref{DI}, the function $I(\tau)=pe^{-\mu\tau}(aM_*-1)e^{-aM_*}-\gamma$ is a decreasing  function with respect to $\tau$,  the set $\mathscr{I}$ is nonempty, we can deduce that a positive $\tau_{max}\in \partial\mathscr{I}$  and $\tau_{max}\notin \mathscr{I}$, which means
\begin{equation}\label{Ilim}
I(\tau)=pe^{-\mu\tau}(aM_*-1)e^{-aM_*}-\gamma\to 0^+,~~~  \text{when}~\tau\to \tau_{max}^-.
\end{equation}
Combining the second equation of Eq.\eqref{Theta}, and $\theta(\tau)\in (\frac{\pi}{2},\pi) $, we have
$$\lim_{\tau\to \tau_{max}^-}\cos\theta(\tau)=-1^+,$$
thus $\theta(\tau)\to \pi^-$ and $\sin\theta(\tau)\to 0^+$  when $\tau\to \tau_{max}^-$. Combining the equation of $\sin\theta(\tau)$ in Eq.\eqref{Theta} and Eq.\eqref{Ilim}, we have
$$\lim_{\tau\to\tau_{max}^-}\omega(\tau)=0^+.$$
From Eq.\eqref{Sm}, $S_0(\tau)=\tau-\frac{\theta(\tau)}{\omega(\tau)}$, then
\begin{equation*}
\begin{split}
\lim_{\tau\to\tau_{max}^-}S_0(\tau)&=\tau_{max}-\lim_{\tau\to\tau_{max}^-}\frac{\theta(\tau)}{\omega(\tau)}\\
&=-\infty.
\end{split}
\end{equation*}
This concludes the proof.
\end{proof}

\begin{remark}
  We should emphasize that the assumption of $\mathscr{I}$ in Eq.\eqref{scrI} is meaningful, which means that when parameters are given under certain conditions, the set $\mathscr{I}$ is nonempty. For example, when $\frac{p}{\gamma}>e^2$ and $(H_1)$ holds, there exists $\delta>0$, for $h\in[0,\delta)$, we have $\tau\in [0,\delta)\subset \mathscr{I}$. In fact, when $h=\tau=0$, $M_*=\frac{1}{a}\ln\frac{p}{\gamma}$, thus $I(0)=p(aM_*-1)e^{-aM_*}-\gamma=\gamma(\ln\frac{p}{\gamma}-1)-\gamma>0$ when  $\frac{p}{\gamma}>e^2$. Since $M_*$ is continuous with respect to $h$ and $\tau$, thus a positive number $\delta>0$ exists such that $\forall (\tau,h)\in [0,\delta)\times [0,\delta)$, $I(\tau)>0$ holds, and hence the set $\mathscr{I}$ is nonempty.
\end{remark}

In the following, we will introduce a lemma given by Beretta and Kuang \cite{Beretta2002}.

\begin{lemma}\label{lem:Beretta}
Assume that the function $S_n{(\tau)}$ has a simple positive root $\tau^*\in \mathscr{I}$ for some $m\in\mathbb{N}_0$, then a pair of simple purely imaginary roots $\pm {\rm i}\omega_*$ of equation (\ref{4}) exists at $\tau=\tau^*$. Let $\lambda(\tau)=\beta(\tau)+{\rm i}\omega(\tau)$ be the root of (\ref{4}) satisfying $\beta(\tau_*)=0$ and
$\omega(\tau^*)=\omega_*$, then
\begin{equation}\label{eqn:sign}
\begin{array}{l}
\mathrm{Sign}\bigg\{\dfrac{d\beta(\tau)}{d\tau}\bigg\}\bigg|_{\tau=\tau^*} \\
=\mathrm{Sign}\bigg\{\dfrac{\partial F}{\partial \omega}(\omega(\tau^*),\tau^*)\bigg\}\times\mathrm{Sign}\bigg\{\dfrac{dS_n(\tau)}{d\tau}\bigg|_{\tau=\tau^*}\bigg\}.
\end{array}
\end{equation}
Since
$$
\dfrac{\partial F(\omega,\tau)}{\partial \omega}=2\omega>0,
$$
condition (\ref{eqn:sign}) is equivalent to
$$
\delta(\tau^*)=\mathrm{Sign}\bigg\{\dfrac{d\beta(\tau)}{d\tau}\bigg\}\bigg|_{\tau=\tau^*}
=\mathrm{Sign}\big\{\dfrac{dS_{n}(\tau)}{d\tau}\bigg|_{\tau=\tau^*}\big\}.
$$
Therefore, this pair of simple conjugate purely imaginary roots crosses the imaginary axis from left to right if $\delta(\tau^*)=1$ and from right to left if $\delta(\tau^*)=-1$.
\end{lemma}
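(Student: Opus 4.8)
The plan is to read the statement as the specialization of the transversality criterion of Beretta and Kuang~\cite{Beretta2002} to the characteristic function of (\ref{4}). Since the structural hypotheses (i)--(v) for $\tilde P$, $\tilde Q$ and $F$ have already been verified, the general machinery applies, and three things remain: to confirm that a zero of $S_n$ yields a \emph{simple} purely imaginary root, to establish the sign identity (\ref{eqn:sign}), and to insert the explicit value $\partial F/\partial\omega=2\omega$.

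The existence part is built into the definition (\ref{Sm}): by construction $\mathrm{i}\omega(\tau^*)$ solves (\ref{4}) exactly when the modulus condition $F(\omega(\tau^*),\tau^*)=0$ and the argument condition $\omega(\tau^*)\tau^*=\theta(\tau^*)+2n\pi$ both hold, and the latter is precisely $S_n(\tau^*)=0$. For simplicity I would differentiate the left-hand side of (\ref{4}) in $\lambda$; using $pe^{-\mu\tau}e^{-aM_*}(1-aM_*)e^{-\lambda\tau}=\lambda+\gamma$ at the root, this derivative equals $1+\tau^*(\lambda+\gamma)$, whose value at $\lambda=\mathrm{i}\omega_*$ has imaginary part $\tau^*\omega_*>0$ and is therefore nonzero; hence $\pm\mathrm{i}\omega_*$ are simple. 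To obtain (\ref{eqn:sign}) I would differentiate (\ref{4}) implicitly with respect to $\tau$, solve for $\mathrm{d}\lambda/\mathrm{d}\tau$, and use the elementary fact that $\mathrm{Re}(1/z)$ and $\mathrm{Re}(z)$ share the same sign, so that $\mathrm{Sign}\{\mathrm{d}\beta/\mathrm{d}\tau\}=\mathrm{Sign}\{\mathrm{Re}(\mathrm{d}\lambda/\mathrm{d}\tau)^{-1}\}$. Evaluating the reciprocal at $\lambda=\mathrm{i}\omega_*$ and substituting the defining relations (\ref{Omega})--(\ref{Theta}) for $\omega(\tau)$ and $\theta(\tau)$, the real part reduces to a strictly positive multiple of $\partial_\omega F(\omega(\tau^*),\tau^*)\cdot S_n'(\tau^*)$, which is (\ref{eqn:sign}).

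The final step is immediate from the explicit form $F(\omega,\tau)=\omega^2+\gamma^2-p^2e^{-2\mu\tau}(1-aM_*)^2e^{-2aM_*}$, which gives $\partial F/\partial\omega=2\omega>0$; hence $\mathrm{Sign}\{\mathrm{d}\beta/\mathrm{d}\tau\}|_{\tau^*}=\mathrm{Sign}\{S_n'(\tau^*)\}$ and the crossing direction follows as stated. I expect the genuine obstacle to lie in the reduction inside the second step: showing that $\mathrm{Re}(\mathrm{d}\lambda/\mathrm{d}\tau)^{-1}$ factors cleanly as (a positive quantity)$\times\partial_\omega F\times S_n'$ requires separating the $\tau$-dependence of the phase $\theta(\tau)/\omega(\tau)$ from that of the amplitude $pe^{-\mu\tau}(aM_*-1)e^{-aM_*}$. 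This is exactly the bookkeeping performed in~\cite{Beretta2002}, so the cleanest route is to quote their theorem; because here $\tilde Q$ is real and independent of $\lambda$, a self-contained derivation is also feasible, at the cost of tracking $M_*'(\tau)$ through $\partial_\omega F$ and $S_n'$.
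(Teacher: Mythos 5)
Your proposal is correct and matches the paper's treatment: the paper gives no proof of this lemma at all, presenting it verbatim as the geometric stability-switch criterion of Beretta and Kuang \cite{Beretta2002}, specialized via the one-line computation $\partial F/\partial\omega=2\omega>0$ after the hypotheses (i)--(v) were verified earlier --- exactly the route you identify as cleanest. Your added check that the purely imaginary roots are simple (since $\Delta'({\rm i}\omega_*)=1+\tau^*\gamma+{\rm i}\tau^*\omega_*\neq 0$) is a correct small supplement the paper leaves implicit.
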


\begin{remark}
For $m\in \mathbb{N}_0$, denote the set of the zeros of $S_m$ by
$$
J_m=\{\tau^{(m)}|\tau^{(m)} \in \mathscr{I}, S_m(\tau^{(m)})=0 \}.
$$
Clearly, $J_m$ is a finite set, $J_{m_1}\cap J_{m_2}=\emptyset$ for $m_1\neq m_2$, and there exists an integer $N\geq 0$ such the $J_m=\emptyset$ when $m\geq N$.

Rearrange these zeros in the set
$$
J=\bigcup\limits_{m\in \mathbb{N}_0}J_m=\{\tau_0,\tau_1,\cdots\tau_k\},~\text{with}~\tau_j<\tau_{j+1},~0\le j\le k-1.
$$
\end{remark}

We make the following hypothesis:

$$
(H_2)~~~~~~\frac{dS_m(\tau)}{d\tau}(\tau^{(m)})\neq0,~~~\mbox{for}~~~\tau^{(m)}\in J.
$$
Applying Lemmas \ref{lemma3.1} and \ref{lemma3.2}, one can obtain the following conclusion.
\begin{lemma}\label{lem:even}
If $J\neq \emptyset$ and $(H_2)$ is satisfied, then the number of elements of $J$ is even.
\end{lemma}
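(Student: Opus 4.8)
The plan is to reduce the parity of $|J|$ to an elementary sign-change count carried out separately on each $S_m$, using the boundary behaviour already recorded for $S_0$ in Lemmas~\ref{lemma3.1} and \ref{lemma3.2}. First I would pin down the shape of the domain: since $I(\tau)$ is decreasing by Remark~\ref{DI}, and $\mathscr{I}\neq\emptyset$ forces $I(0)>0$ (exactly as in the proof of Lemma~\ref{lemma3.1}), the set $\mathscr{I}$ is the single interval $[0,\tau_{max})$, with $\tau_{max}$ the right boundary produced in Lemma~\ref{lemma3.2}. On this interval $\omega(\tau)>0$ and $\theta(\tau)\in(\pi/2,\pi)$, so every $S_m$ is continuous and well defined.

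Next I would extend the two boundary facts from $S_0$ to every $S_m$, since both arguments transfer verbatim. At the left endpoint, $S_m(0)=-\frac{\theta(0)+2m\pi}{\omega(0)}<0$ for all $m\in\mathbb{N}_0$, because adding $2m\pi\ge 0$ only enlarges the subtracted term; this is the content of Lemma~\ref{lemma3.1} applied to each $m$. At the right endpoint, Lemma~\ref{lemma3.2} gives $\omega(\tau)\to 0^+$ and $\theta(\tau)\to\pi^-$ as $\tau\to\tau_{max}^-$, whence $\frac{\theta(\tau)+2m\pi}{\omega(\tau)}\to+\infty$ and therefore $S_m(\tau)\to-\infty$ for every $m$. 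Thus each $S_m$ is strictly negative at $0$ and tends to $-\infty$ at $\tau_{max}^-$.

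I would then count the zeros of each $S_m$ by a sign argument. Under $(H_2)$ every zero $\tau^{(m)}\in J_m$ is simple, so $S_m$ crosses zero transversally and changes sign at each of its zeros; between consecutive zeros its sign is constant. A continuous function on $[0,\tau_{max})$ that is negative at $0$ and tends to $-\infty$ at $\tau_{max}^-$, with only transversal zeros, must therefore have an even number of them, since the sign sequence starts at ``negative'', alternates at each crossing, and must return to ``negative''. Hence $|J_m|$ is even (possibly zero) for every $m$.

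Finally, I would assemble the pieces. The sets $J_m$ are pairwise disjoint, for if $S_{m_1}(\tau)=S_{m_2}(\tau)=0$ then $2m_1\pi=2m_2\pi$, forcing $m_1=m_2$; and $J_m=\emptyset$ for $m\ge N$. Consequently $|J|=\sum_{m=0}^{N-1}|J_m|$ is a finite sum of even integers and is therefore even. The main obstacle is the careful justification that $\mathscr{I}=[0,\tau_{max})$ is genuinely a single interval and that the limits $\omega\to 0^+,\ \theta\to\pi^-$ drive $S_m\to-\infty$ at the right endpoint for \emph{all} $m$, not merely for $S_0$; once these boundary behaviours are secured uniformly in $m$, the parity conclusion is immediate from the sign count and does not even require the crossing-direction information of Lemma~\ref{lem:Beretta}.
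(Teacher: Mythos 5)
Your proof is correct and follows essentially the same route as the paper, which obtains Lemma~\ref{lem:even} directly from the endpoint behaviour of the curves $S_m$: negativity at $\tau=0$ (Lemma~\ref{lemma3.1}) and divergence to $-\infty$ as $\tau\to\tau_{max}^-$ (Lemma~\ref{lemma3.2}), with $(H_2)$ making every zero a transversal sign change, so each $S_m$ has an even number of zeros and $J$ is a disjoint finite union of even sets. Your write-up simply makes explicit what the paper leaves implicit, namely that $\mathscr{I}=[0,\tau_{max})$ is a single interval (via Remark~\ref{DI}) and that the two boundary facts proved for $S_0$ transfer verbatim to every $S_m$.
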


For simplicity, we denote $\bar{\tau}$ as the supremum of $\tau$ for which $M_*$ exists. Then by applying corollary in \cite{ruanwei}, we can draw the conclusion: if $\tau\in(0,\tau_0)\cup (\tau_k,\bar{\tau})$, then all roots of Eq.(\ref{4}) have negative real parts; When $\tau\in (\tau_0,\tau_k)$, at least one pair of roots have positive real parts of Eq.(\ref{4}). Besides, a pair of purely imaginary roots of Eq.(\ref{4}) exist when $\tau=\tau_n\in J$.

Summarizing the discussions above, we have the following theorem which describe the stability of the equilibrium $M_*$ and the existence of Hopf bifurcation to  Eq.(\ref{m}).
\begin{theorem}\label{theo:bifurcation}
Assume that $(H_1)$ and $h<h_*$ are satisfied.

$(i)$ If either $\mathscr{I}$ is empty or the function $S_0$ has no positive zero in $I(\tau)$, then for all $\tau<\bar{\tau}$, the equilibrium $M_*$ of Eq.(\ref{m}) is locally asymptotically stable;

$(ii)$ If $J\neq \emptyset$ and $(H_2)$ is satisfied, then the equilibrium $M_*$ of Eq.(\ref{m}) is locally asymptotically stable for $\tau\in (0,\tau_0)\cup (\tau_k,\bar{\tau})$ and unstable for $\tau \in (\tau_0,\tau_{k})$ with a Hopf bifurcation occurring at $M_*$ when $\tau=\tau_n\in J$.
\end{theorem}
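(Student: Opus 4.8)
The plan is to combine the base stability at $\tau=0$ with a continuity-of-roots argument, identifying purely imaginary crossings with the zeros of the $S_n$ and controlling their directions through Lemma~\ref{lem:Beretta}. The starting point is the already-established fact that at $\tau=0$ every root of the characteristic equation~(\ref{4}) lies in the open left half-plane, so $M_*$ is locally asymptotically stable there. Since the roots of~(\ref{4}) vary continuously with $\tau$, the local stability of $M_*$ can change only when a root crosses the imaginary axis, i.e.\ when~(\ref{4}) admits a purely imaginary root ${\rm i}\omega$ with $\omega>0$. By the earlier reduction, such an $\omega$ exists precisely when~(\ref{gam2}) holds, equivalently $\tau\in\mathscr{I}$, and then ${\rm i}\omega(\tau)$ is a root of~(\ref{4}) exactly at the zeros of the functions $S_n$ in~(\ref{Sm}). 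Thus the whole analysis rests on the location and the crossing direction at the zeros collected in $J$.

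For part $(i)$ I would treat the two hypotheses separately. If $\mathscr{I}=\emptyset$, then~(\ref{gam2}) fails for every $\tau$, so~(\ref{4}) has no purely imaginary root for any $\tau<\bar{\tau}$; continuity of the roots together with the base case at $\tau=0$ then forces every root to remain in the left half-plane for all $\tau<\bar{\tau}$, giving stability. If instead $\mathscr{I}\neq\emptyset$ but $S_0$ has no positive zero, I would invoke Lemma~\ref{lemma3.1} and Lemma~\ref{lemma3.2}, which give $S_0(0)<0$ and $S_0(\tau)\to-\infty$ as $\tau\to\tau_{max}^-$; combined with the absence of a positive zero this yields $S_0(\tau)<0$ throughout $\mathscr{I}$. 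Since $S_n(\tau)<S_0(\tau)$ for every $n\ge 1$, no $S_n$ can vanish, hence $J=\emptyset$ and again no imaginary crossing occurs, so $M_*$ stays stable for all $\tau<\bar{\tau}$.

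For part $(ii)$, with $J=\{\tau_0,\dots,\tau_k\}$ nonempty and $(H_2)$ in force, each $\tau_n$ is a simple zero of some $S_m$, so Lemma~\ref{lem:Beretta} applies and fixes the crossing direction through $\mathrm{Sign}\{d\beta/d\tau\}=\delta(\tau^*)=\mathrm{Sign}\{dS_m/d\tau\}$. I would then track the number of roots in the right half-plane as $\tau$ increases from $0$: starting from no such roots, the first crossing at $\tau_0$ must be from left to right (a root pair enters the right half-plane) and the last crossing at $\tau_k$ must be from right to left (the pair leaves), which is exactly the conclusion packaged by the corollary of Ruan and Wei in~\cite{ruanwei}. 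This yields instability on $(\tau_0,\tau_k)$ and stability on $(0,\tau_0)\cup(\tau_k,\bar{\tau})$. At each $\tau_n\in J$ the transversality $(H_2)$ guarantees that the simple purely imaginary pair $\pm{\rm i}\omega(\tau_n)$ crosses the axis with nonzero speed, so the Hopf bifurcation theorem produces a branch of periodic solutions bifurcating from $M_*$.

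The main obstacle I expect is the bookkeeping in part $(ii)$: verifying that the crossings at $\tau_0,\dots,\tau_k$ balance so that the count of right-half-plane roots is positive on all of $(\tau_0,\tau_k)$ yet returns to zero at $\tau_k$. This is where Lemma~\ref{lem:even} (the evenness of $|J|$) together with the limiting behavior of $S_0$ from Lemmas~\ref{lemma3.1} and~\ref{lemma3.2} are essential, since they pin the signs of $dS_m/d\tau$ at the extreme zeros and thereby force $\tau_0$ to be a destabilizing crossing and $\tau_k$ a stabilizing one. The cleanest route is to let the Ruan--Wei corollary absorb this counting once the direction data $\delta(\tau_n)$ have been supplied by Lemma~\ref{lem:Beretta}.
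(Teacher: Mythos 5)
Your proposal is correct and follows essentially the same route as the paper: stability at $\tau=0$ plus the Beretta--Kuang framework, with purely imaginary crossings located at the zeros of the $S_n$, crossing directions supplied by Lemma~\ref{lem:Beretta}, the sign information at the extreme zeros pinned down by Lemmas~\ref{lemma3.1}, \ref{lemma3.2} and \ref{lem:even}, and the final root-counting on $(0,\tau_0)$, $(\tau_0,\tau_k)$ and $(\tau_k,\bar{\tau})$ delegated to the corollary of Ruan and Wei in~\cite{ruanwei}. The paper states the theorem as a summary of exactly this chain of results, so your fleshed-out version (including the explicit treatment of the two cases in part $(i)$ via $S_0<0$ on $\mathscr{I}$ and the monotone ordering $S_n<S_0$) matches its proof in substance.
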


\subsection{The direction and stability of the Hopf bifurcation}
In this subsection, we shall employ the results due to Bal$\acute{a}$zs and R$\ddot{o}$st \cite{R} to study the direction of the Hopf bifurcations and the stability of the bifurcating periodic solutions.

We introduce the following variable changes:
$$
M(t)=x(t)-M_*~~\mbox{and}~~x(t)=\frac{1}{a}y(\tau t).
$$
Then (\ref{m}) becomes
\begin{equation}\label{h}
y'(t)=-\tau\{\gamma y(t)-p[y(t-1)+aM_*]e^{-\mu\tau}e^{-aM_*}e^{-y(t-1)}\}-\tau a(\gamma M_*+h).
\end{equation}
From the previous subsection, we know that for $h\in[0,h_*)$, if $(H_1)$ and the conditions of $(ii)$ in Theorem \ref{theo:bifurcation}
are satisfied, then (\ref{h}) undergoes a Hopf bifurcation at $y=0$ when $\tau=\tau_n\in J$.
By the Theorem in Faria and Magalh$\tilde{a}$es \cite{Faria}, we know that the flow of (\ref{h}) on the center manifold of the origin
can be given in polar coordinates $(\rho,\xi)$ by the following equation:
$$
\begin{cases}
\dot{\rho}=\alpha\nu'(0)\rho+K\rho^3+\mathcal{O}(\alpha^2\rho+|(\rho,\alpha)|^4),\\
\dot{\xi}=-\omega+\mathcal{O}|(\rho,\alpha)|,
\end{cases}
$$
where $\alpha=\tau-\tau_n$, $\omega=\tau_n\omega(\tau_n)$ and $\nu'(0)=\tau_n\beta'(\tau_n)$. Furthermore, if $\nu'(0)K<0~(\mbox{respectively}~\nu'(0)K>0)$,
the direction of the Hopf bifurcation is forward (respectively, backward). Besides, the bifurcating periodic solutions are stable (respectively, unstable) if
$K<0~(\mbox{respectively}~K>0)$ on the center manifold. Particularly, the stability of bifurcating periodic solutions of (\ref{h}) and that on the center manifold
are consistent at the first critical value $\tau_0$ and last critical value $\tau_k$.

As is known, the equation $pe^{-\mu\tau}e^{-aM_*}=\gamma$ holds when $h=0$, then Eq.(\ref{h}) with $h=0$ can be given by
\begin{equation}\label{y}
y'(t)=-\tau\gamma[y(t)+aM_*(1-e^{-y(t-1)})-y(t-1)e^{-y(t-1)}].
\end{equation}
Notice that the form of (\ref{y}) is
same as (3) in Bal$\acute{a}$zs and R$\ddot{o}$st \cite{R}, Bal$\acute{a}$zs and R$\ddot{o}$st have obtained $K<0$ for $b>1$, see\cite[Subsection 2.2]{R}. For Eq. (\ref{y}),
$b=aM_*-1$. For $h=0$, the existence of purely imaginary root of the characteristic equation (\ref{4}) guarantee $b>1$. In fact, the substitution of ${\rm i}\omega$ into (\ref{4})
leads to $\omega=\sqrt{p^2e^{-2\mu\tau} e^{-2aM_*}b^2-\gamma^2}$. Notice that $M_*=\frac{1}{a}(\ln\frac{p}{\gamma}-\mu\tau)$, we have $\omega=\gamma\sqrt{b^2-1}$. This
implies $|b|>1$. We claim that $b>1$. Otherwise, $b<-1$, that is $b=aM_*-1=\ln\frac{p}{\gamma}-\mu\tau-1<-1$, it follows that $\ln\frac{p}{\gamma}-\mu\tau<0$, which contradicts
to the hypothesis $(H_1)$. This shows that $b>1$. Thus, $K<0$ for Eq. (\ref{y}). Meanwhile, $M_*(h)$ is continuous with respect to $h$, so $K$ does. This implies that, for (\ref{h}),
$K<0$ when $h>0$ and close enough to $0$. Then, we have the following conclusion.
\begin{theorem}\label{direction}
For system (\ref{m}), if $(H_1)$, $(H_2)$, $0\leq h\ll 1$ and $J\neq \emptyset$ are satisfied, then the bifurcating periodic solutions are all asymptotically stable at $\tau_0$ and $\tau_k$, and the direction of the Hopf bifurcation is forward at $\tau_0$, and backward at $\tau_k$.
\end{theorem}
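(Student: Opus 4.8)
The plan is to reduce the whole statement to two sign computations: the sign of the cubic normal-form coefficient $K$ on the center manifold, and the sign of $\nu'(0)=\tau_n\beta'(\tau_n)$ at the two extreme bifurcation values $\tau_0$ and $\tau_k$. Once these are in hand the conclusions follow immediately from the criterion quoted from Faria and Magalh\~aes \cite{Faria}: the bifurcation is forward when $\nu'(0)K<0$ and backward when $\nu'(0)K>0$, while the bifurcating periodic solutions are asymptotically stable exactly when $K<0$ (the stability on the center manifold coinciding with the true stability precisely at the first and last critical values).

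For the sign of $K$ I would argue as in the discussion preceding the theorem. At $h=0$ the normalized equation (\ref{h}) reduces to (\ref{y}), which is literally equation (3) of Bal\'azs and R\"ost \cite{R} with $b=aM_*-1$, and their computation in \cite[Subsection 2.2]{R} gives $K<0$ whenever $b>1$. To secure $b>1$ I would use the existence of a purely imaginary root of (\ref{4}): substituting ${\rm i}\omega$ and using $M_*=\frac{1}{a}(\ln\frac{p}{\gamma}-\mu\tau)$ yields $\omega=\gamma\sqrt{b^2-1}$, forcing $|b|>1$; the alternative $b<-1$ would give $\ln\frac{p}{\gamma}-\mu\tau<0$, contradicting $(H_1)$, so $b>1$ and hence $K<0$ for (\ref{y}). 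Since $M_*(h)$, and therefore $K$, depends continuously on $h$, the strict inequality $K<0$ persists for all sufficiently small $h\ge 0$, which is where the hypothesis $0\le h\ll 1$ enters.

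For the sign of $\nu'(0)$ I would invoke Lemma \ref{lem:Beretta}, which (since $\partial F/\partial\omega=2\omega>0$) identifies $\mathrm{Sign}\,\beta'(\tau^*)$ with $\mathrm{Sign}\,S_n'(\tau^*)$ at any simple zero $\tau^*\in\mathscr{I}$ of $S_n$, so the task becomes locating $\tau_0$ and $\tau_k$ inside the family $\{S_n\}$ and reading off the relevant derivative. Using the ordering $S_n(\tau)>S_{n+1}(\tau)$ together with Lemmas \ref{lemma3.1} and \ref{lemma3.2}, I would show that both the smallest zero $\tau_0$ and the largest zero $\tau_k$ in $J$ are zeros of $S_0$: at any zero of some $S_m$ with $m\ge 1$ one has $S_0>0$, so $S_0$, which satisfies $S_0(0)<0$ and $S_0(\tau)\to-\infty$ as $\tau\to\tau_{max}^-$, must already have crossed upward before $\tau_0$ and not yet crossed downward after $\tau_k$. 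Consequently $S_0$ crosses zero upward at $\tau_0$ and downward at $\tau_k$, so $(H_2)$ gives $S_0'(\tau_0)>0$ and $S_0'(\tau_k)<0$, hence $\beta'(\tau_0)>0$ and $\beta'(\tau_k)<0$; as $\tau_0,\tau_k>0$ this means $\nu'(0)>0$ at $\tau_0$ and $\nu'(0)<0$ at $\tau_k$.

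Combining the two computations finishes the proof: at $\tau_0$, $\nu'(0)>0$ and $K<0$ give $\nu'(0)K<0$ (forward); at $\tau_k$, $\nu'(0)<0$ and $K<0$ give $\nu'(0)K>0$ (backward); and $K<0$ at both values yields asymptotically stable bifurcating periodic solutions. I expect the main obstacle to be the geometric bookkeeping of the second step, namely rigorously justifying that the extreme elements of $J$ belong to $S_0$ and that $S_0$ passes through them transversally, rather than touching zero tangentially or having an extreme crossing supplied by a higher-index $S_n$. This is exactly what the monotone ordering $S_n>S_{n+1}$, the boundary behaviour from Lemmas \ref{lemma3.1}--\ref{lemma3.2}, and the simplicity hypothesis $(H_2)$ are designed to exclude, so the argument is safe but must be spelled out with care.
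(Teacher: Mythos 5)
Your proposal is correct and takes essentially the same route as the paper: you obtain $K<0$ exactly as in the text, via Bal\'azs--R\"ost's computation for (\ref{y}) at $h=0$ with $b=aM_*-1>1$ followed by continuity in $h$, and you get $\beta'(\tau_0)>0$, $\beta'(\tau_k)<0$ from Lemma \ref{lem:Beretta} combined with the geometry of the curves $S_n$. Your explicit bookkeeping showing that the extreme elements of $J$ are transversal zeros of $S_0$ (using $S_n>S_{n+1}$, $S_0(0)<0$, $S_0(\tau)\to-\infty$ as $\tau\to\tau_{max}^-$, and $(H_2)$) simply spells out a step the paper leaves implicit through Lemmas \ref{lemma3.1}--\ref{lemma3.2} and Theorem \ref{theo:bifurcation}, so the argument matches and no gap remains.
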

\begin{remark}
For Eq.(\ref{m}) with $h=0$, Shu et al. in \cite{Shu} obtained that the model has only a finite number of Hopf bifurcation singularities, they described how branches of Hopf bifurcations
are paired so the existence of periodic solutions with specific oscillation frequencies occurs only in bounded delay intervals. Our Theorem \ref{direction} shows that $K<0$ for each Hopf bifurcation value $\tau_j$, thus the bifurcating periodic solutions on the center manifold are stable, the direction of the Hopf bifurcation is determined by $\beta'(\tau_j)$. Particularly, we can deduce that when Hopf bifurcations exist in \cite{Shu}, the bifurcating periodic solutions are all stable at the first and the last Hopf bifurcation values. Besides, the bifurcation direction is forward for the first bifurcation and backward for the last bifurcation.
\end{remark}

\section{Numerical simulations }\label{sec4}

In this section, we shall present some numerical simulations for illustration of the theoretical results obtained in the previous section.

The parameters are chosen as
\begin{equation}\label{para}
p=15, \mu=0.2, a=0.2, \gamma=0.1, h=0.1.
\end{equation}
By direct calculation, we can get that $\frac{1}{\mu}\ln(p/\gamma)\approx24.0532.$ With the help of Matlab, one can obtain the equilibria curve $M_0(\tau)$ and $M_*(\tau)$ with the varying of $\tau$, as is shown in Figure \ref{fig3}(a). Obviously, system (\ref{m}) has two positive equilibria for $\tau<14.81$ under the parameters given by Eq.(\ref{para}), and $\bar{\tau}\approx 14.81$. Since $M_*(\tau)$ has already been numerically calculated, we can plot the curve of the function $I(\tau)$, as is shown in Figure \ref{fig3}(b). Obviously, the set $\mathscr{I}=[0, 13.5696)$.
\begin{figure}[htbp]
  \centering
  \begin{subfigure}[b]{0.48\textwidth}
    \includegraphics[width=\textwidth]{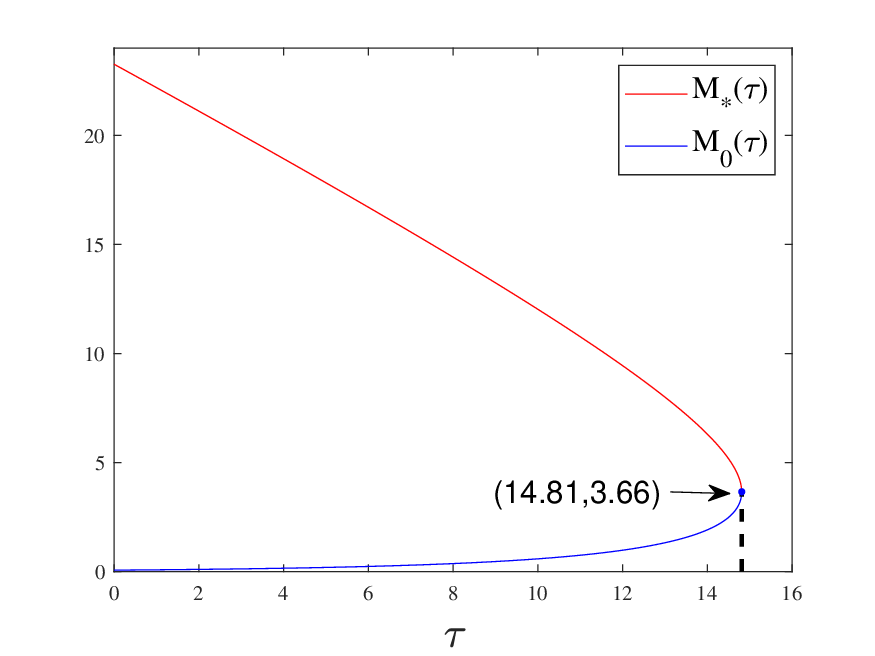}
    \caption{}
  \end{subfigure}
  \hfill
  \begin{subfigure}[b]{0.48\textwidth}
    \includegraphics[width=\textwidth]{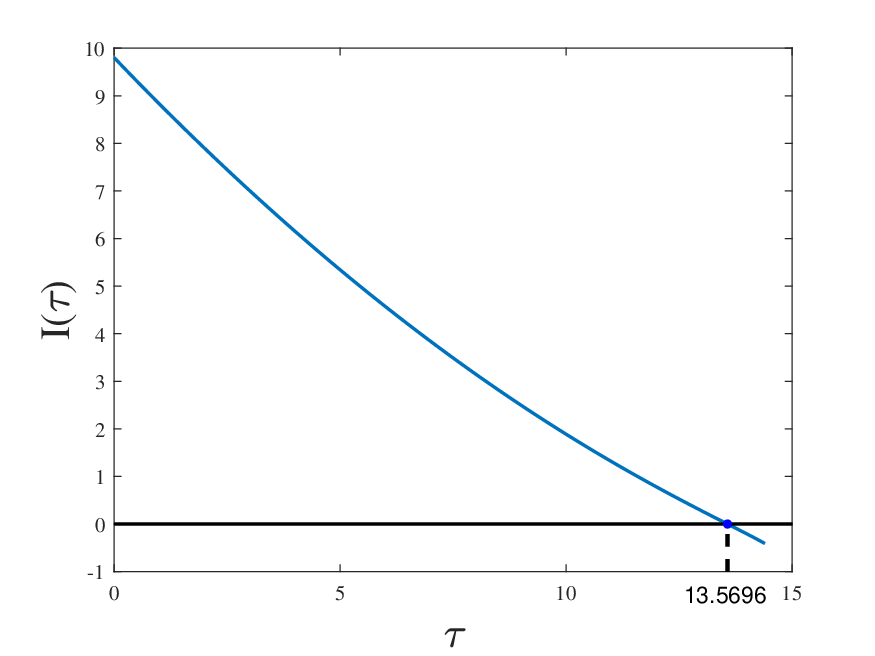}
    \caption{}
  \end{subfigure}
   \caption{(a) The equilibria of system (\ref{m}) with the varying of $\tau$. (b) The function curve of $I(\tau)$ on $(0,14.81)$.}
\label{fig3}
\end{figure}

For $\tau\in\mathscr{I}$, we can obtain the curves of $\sin\theta(\tau)$ and $\cos\theta(\tau)$ in Eq.(\ref{Theta}), and then obtain the curves of $S_n(\tau)$ in Eq.(\ref{Sm}), as is shown in Figure \ref{fig4}. Besides, we can get that $\tau_0\approx 4.53, \tau_1\approx 11.391$, $J=\{\tau_0, \tau_1\}$, $\frac{dS_0(\tau)}{d\tau}|_{\tau=\tau_0}>0$,  $\frac{dS_0(\tau)}{d\tau}|_{\tau=\tau_1}<0$, which means  $\tau=\tau_0$ and $\tau=\tau_1$ are two Hopf singularities.

\begin{figure}[!ht]
\centering
\begin{tabular}{ccc}
\includegraphics[height=6cm,width=8cm]{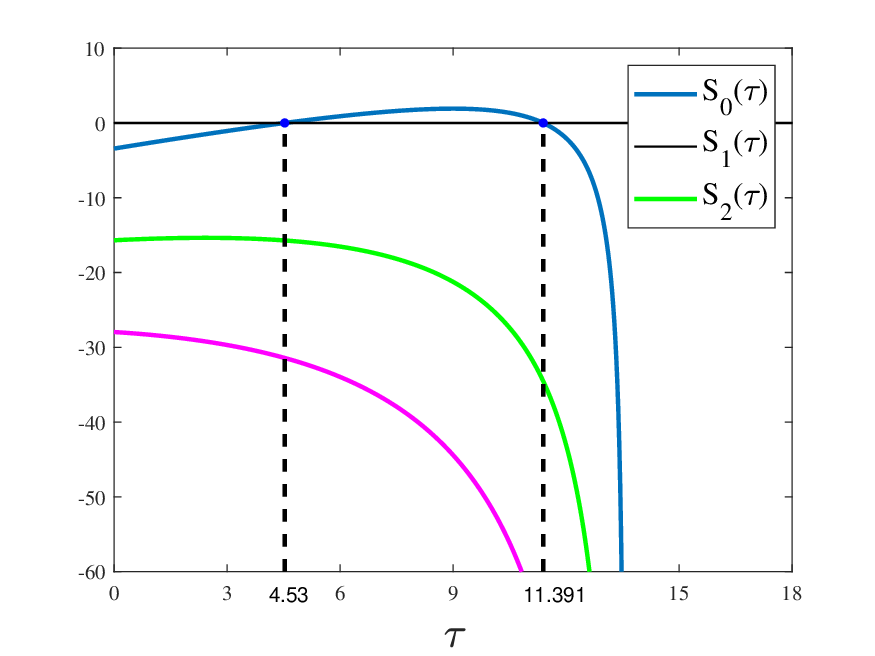}
\\
\end{tabular}
\caption{The graphs of $S_m(\tau)$ on $\mathcal{I}$.}
\label{fig4}
\end{figure}

From Theorem \ref{theo:bifurcation}, we get that $M_*$ is locally asymptotically stable for $\tau\in[0,\tau_0)\cup(\tau_1,\bar{\tau})$ and unstable for $\tau\in(\tau_0,\tau_1)$. The periodic solutions of system (\ref{m}) are bifurcated when $\tau=\tau_0$ and $\tau=\tau_1$.

By varying $\tau$, we have drawn the solution $M(t)$ of system (\ref{m}) under the parameters given by Eq.(\ref{para}), the initial value is  $M_0(t)=10+\cos(2\pi t/\tau)$ for $t\in[-\tau,0]$.
When $\tau=4<\tau_0$, $M_*$ is locally asymptotically stable, which is shown in Figure \ref{fig5}(a); when $\tau=4.7\in(\tau_0,\tau_1)$, periodic solution occurs in system (\ref{m}), this is shown in Figure \ref{fig5}(b);
when $\tau=11\in(\tau_0,\tau_1)$, periodic solution also occurs in system (\ref{m}), this is shown in Figure \ref{fig5}(c); when $\tau=11.8\in(\tau_1, \bar{\tau})$, $M_*$ is locally asymptotically stable, this is shown in Figure \ref{fig5}(d). The numerical simulations is consistent with the analysis in Theorem \ref{theo:bifurcation}.

\begin{figure}[htbp]
  \centering
  \begin{subfigure}[b]{0.48\textwidth}
    \includegraphics[width=\textwidth]{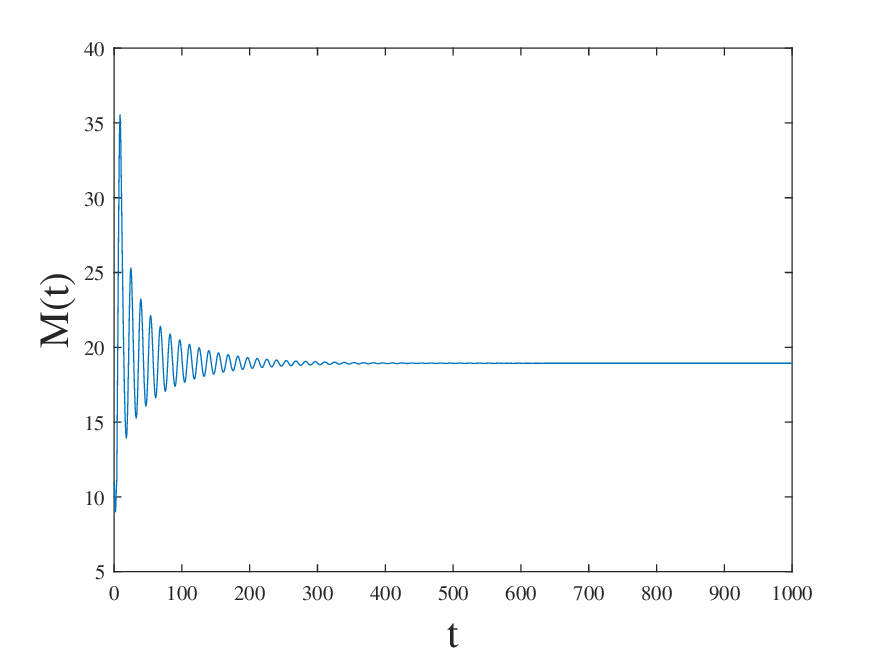}
    \caption{}
  \end{subfigure}
  \hfill
  \begin{subfigure}[b]{0.48\textwidth}
    \includegraphics[width=\textwidth]{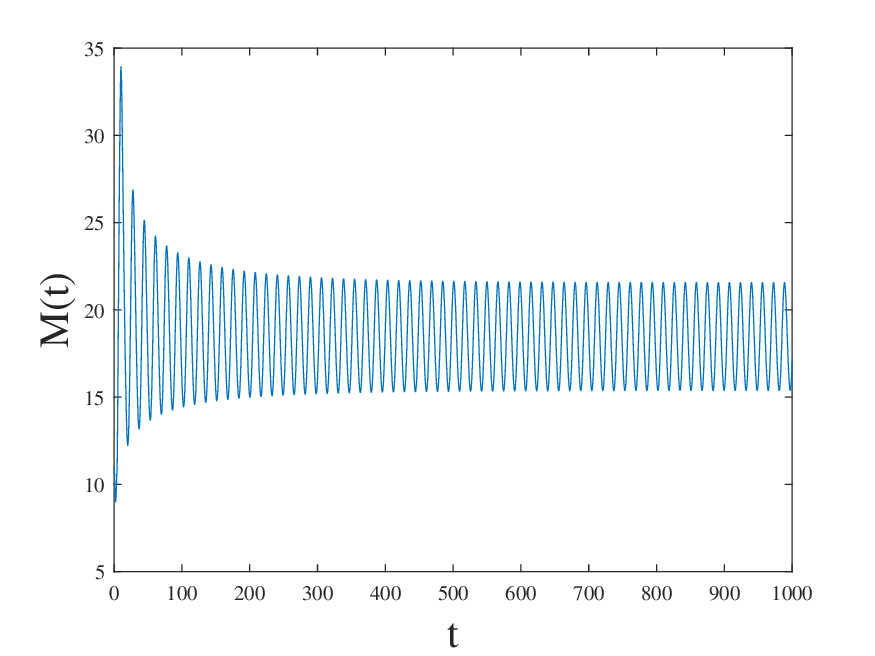}
    \caption{}
  \end{subfigure}
  \\
   \begin{subfigure}[b]{0.48\textwidth}
    \includegraphics[width=\textwidth]{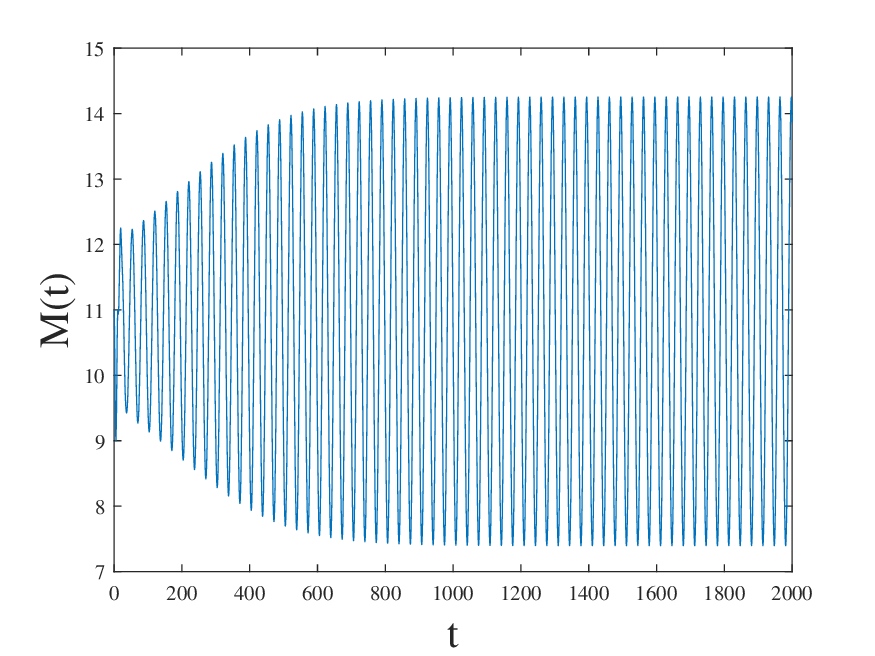}
    \caption{}
  \end{subfigure}
  \hfill
  \begin{subfigure}[b]{0.48\textwidth}
    \includegraphics[width=\textwidth]{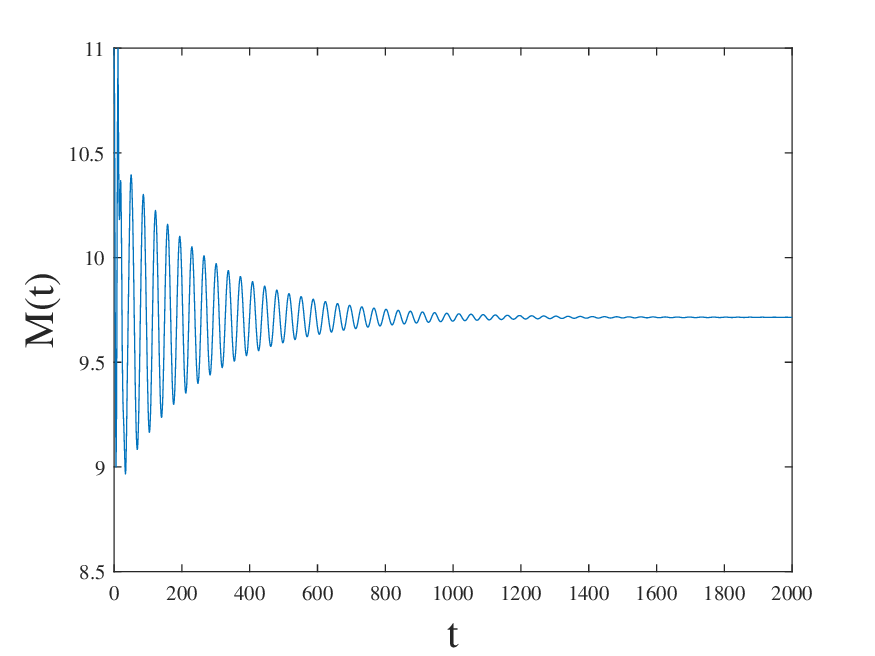}
    \caption{}
  \end{subfigure}
  \caption{The solution $M(t)$ of system (\ref{m}) with different values of $\tau$. (a) $\tau=4$, (b) $\tau=4.7$, (c) $\tau=11$ and (d) $\tau=11.8$. The initial value is $M_0(t)=10+\cos(2\pi t/\tau)$.}
\label{fig5}
\end{figure}

\begin{remark}
Under the data (\ref{para}), there are only two Hopf singularities $\tau_0$ and $\tau_1$, and $\beta'(\tau_0)>0$, $\beta'(\tau_1)<0$. Therefore, we can deduce that when $h\geq 0$ is sufficient small, the bifurcating periodic solutions are all asymptotically stable. Meanwhile, the direction of the Hopf bifurcation is forward at $\tau_0$ and backward at $\tau_1$.
\end{remark}

\section{Conclusion}
In this paper, the dynamics of a single-species model with stage structure and harvest is investigated. Theoretical analysis shows that harvest has an impact on the existence of the equilibrium. When the harvest rate is high, there is no positive equilibrium in system (\ref{m}); when the harvest rate is low enough, there are two positive equilibria in the system. Besides, analysis indicates that when system (\ref{m}) has two positive equilibria, the smaller one is unstable. By varying $\tau$, under certain conditions, we have proved that the larger equilibrium would go from stable to unstable, leading to periodic solutions in the system. It is worth noting that the equilibrium $M_*$ changes with the varying of $\tau$. Finally, numerical simulations are carried out for illustrating the theoretical results.

*\section*{Declaration of competing interest}
 The authors declare that they have no known competing financial interests or personal relationships that could have appeared to influence the work reported in this paper.

\section*{Acknowledgments}
 This research  is supported by the National Natural Science Foundations of China (No. 12301643, 12171117), Natural Science Foundation of Jiangsu Province, China (No. BK20221106).

\end{document}